\newtheorem{theorem}{Theorem}
\newtheorem{definition}[theorem]{Definition}
\newtheorem{lemma}[theorem]{Lemma}
\newtheorem{proposition}[theorem]{Proposition}
\newtheorem{preremark}[theorem]{Remark}
\newenvironment{remark}{\begin{preremark}\rm}{\end{preremark}}
\def\M{\mathcal{M}}
\def\real{{ \mathbb R}}
\def\dist{ {\text dist}}
\title[On an example of F. Almgren and H. Federer]
{On a remarkable example of F. Almgren and H. Federer in 
the global theory of minimizing geodesics}
\author[X. Su] {Xifeng Su}
\thanks{X. S supported by National Natural Science Foundation of China (Grant No. 11301513) and ``the Fundamental Research Funds for the Central Universities"}
\address{School of Mathematical Sciences\\
Beijing Normal University\\
No. 19, XinJieKouWai St.,HaiDian District\\
 Beijing 100875, P. R. China}
\email{xfsu@bnu.edu.cn}
\author[R. de la Llave]{Rafael de la Llave}
\thanks{R.L. has been partially supported by
NSF grant DMS-1800241. Progress was made while 
R.L. was visiting the JLU-GT institute for theoretical Science
and Beijing Normal University. 
The final version was written 
while R.L.
was in residence at the Mathematical Sciences Research Institute in Berkeley, California, during the Fall 2018 semester supported by DMS-1440140 }
\address{School of Mathematics\\
Georgia Institute of Technology \\
686 Cherry St. \\
Atlanta GA 30332, USA}
\email{rafael.delallave@math.gatech.edu}
\begin{document}

\today

\maketitle

\begin{abstract}
We present an exposition of 
a  remarkable example attributed to Frederick Almgren Jr. in \cite[Section 5.11]{Federer74} to illustrate the need of certain definitions 
in the calculus of variations. 

The Almgren-Federer example,  besides its
intended goal of illustrating subtle aspects of 
geometric measure theory,
is also a problem in the theory of geodesics.  Hence, we wrote 
an exposition of the beautiful ideas of Almgren and Federer from 
the point of view of geodesics.

In the language of geodesics, Almgren-Federer example constructs  metrics in
$\mathbb{S}^1\times \mathbb{S}^2$,
with the property  that none of the  Tonelli geodesics 
(geodesics which minimize the length in a homotopy class) 
are Class-A  minimizers in the sense of Morse
(any finite length segment in the universal cover 
minimizes the length between the end points; this is also 
sometimes given other names). In other words, even if a curve is a
minimizer of length among all the curves homotopic to it, 
by repeating it enough times, we get a closed curve which does 
not minimize in its homotopy class.

In that respect, the example is more dramatic than a better known example 
due to Hedlund of 
a metric in  $\mathbb{T}^3$ for which only 3 Tonelli minimizers (and 
their multiples) are Class-A minimizers. 

For dynamics, the example also illustrates different definitions of ``integrable'' and clarifies the relation between minimization and hyperbolicity and 
its interaction with topology. 
\end{abstract}


{\bf Keywords: }\keywords{global calculus of variation, geodesics, c-minimzers, flat chains, periodic orbits}

{\bf MSC:}\subjclass[2010]{
37J50   
49Q15 
34C25   
53C22   
37J35   
}

\section{introduction}

The paper \cite{Federer74}, lays the foundation of the theory of flat
chains. In Section 5.11 it presents a very remarkable example
(attributed there to F. Almgren Jr. and which we will henceforth  call 
Almgren-Federer example).

 The role of the Almgren-Federer example in
\cite{Federer74} is to illustrate the need of considering rather
general objects (flat chains) in the calculus of variations in geometric measure
theory. The theory developed in \cite{Federer74} shows
that certain functionals have minimizers in 
the set of flat chains. The example shows 
that the minimizers cannot be much simpler objects.

This Almgren-Federer example is a geometric problem and its
properties are illuminating also for several other versions of
calculus of variations (e.g. for the theory of geodesics and for
problems in Hamiltonian dynamics).  It can also serve as motivation 
for some of the definitions in theory of minimizing measures
\cite{Mather91, Mane90}. Tentatively, we also believe it 
sheds some light in the problem of homogeneization on periodic media
or the problems of statistical mechanics in quasi-crystals.

The extremely precise and beautiful (but multilayered) language of geometric measure theory may be an
impediment for many readers to appreciate the beauty of the example
and to appreciate its role on other contexts in which it is 
also relevant.  The goal of this paper is to present the ideas 
behind the example in a way that it is accessible to practioners in 
other fields (e.g. mechanics or geodesic flows)  for 
which the example is highly relevant.  In particular, we present 
applications to the theory of geodesics.

In the language of the theory of geodesics, the example constructs metrics 
in 
 $\mathbb{S}^1\times \mathbb{S}^2$, for  which 
none of the  Tonelli geodesics 
(periodic geodesics which minimize the length  in a homotopy class, see 
Appendix~\ref{sec:tonelli}) is a 
Class-A  minimizer in the sense of \cite{Morse24}
 (any finite length segment of the lift of the orbit to the universal cover 
minimizes the length between the two end points of the segment.)~\footnote{In the literature, this property is referred by many other 
names such as minimizer, local minimizer, global minimizer, etc. 
Unfortunately, these names are not used consistently for 
the same concepts by different authors, so we 
prefer to revert to the names used in \cite{Morse24}. }
See Appendix~\ref{sec:morse}.

It is interesting to compare the Almgren-Federer example with the 
better known Hedlund example \cite{Hedlund32,Levi97}. In the Hedlund example, 
three Tonelli geodesics (and their multiples) 
 are Class-A, whereas in the Almgren-Federer example, \emph{none} 
of the Tonelli minimizers is Class-A.
As a matter of fact, in the Almgren-Federer example, 
we give a characterization  of the Class-A geodesics, none of which is periodic.
Of course, the mechanism in Almgren-Federer example is 
very different from the mechanism in \cite{Hedlund32}.
\smallskip

The paper \cite{Levi97} studies the global dynamical properties of
Hedlund example. This study shows that there are geodesics with surprising properties (e.g. remaining close to each of the minimizers for very long segments).
The paper \cite{Bangert88} presents a reworking of the results of
\cite{Morse24, Hedlund32} on 2-D manifolds  from a new 
very  powerful point of view
that also allows to obtain many results on twist mappings and solid state models
that were originally obtained by \cite{Mather82,AubryD}.

\subsection{Organization of the paper}

In Section~\ref{sec:description} we present the Almgren-Federer 
example and call attention some  geometric properties. 
In Section~\ref{sec:statements} we state formally the main results of 
the paper: the fact that Tonelli orbits are not Class-A and 
the characterization of Class-A geodesics for the example.
We also provide some heuristic intuition on why the results should be true. 

The proofs of the results are presented in 
Sections~\ref{sec:tonelliproof} and~\ref{sec:classA}.
 Given that the aim of 
the paper is pedagogical, besides the precise proofs we have included many heuristic comments. 
We hope that they are not too distracting, but if they are, 
the reader is encouraged to skip them. 

In the final Section~\ref{sec:speculation} we indulge in some heuristic 
explanation of the possible relevance of the ideas in Almgren-Federer 
example in other areas such 
as quasi-periodic media.

In Appendix~\ref{sec:geodesic}, we collect some of
the  theory of geodesic flows. It turns out that the
Almgren-Federer example also illustrates the relation between
different notions of integrability in Hamiltonian systems.

In Appendices~\ref{sec:morse} and ~\ref{sec:tonelli}, we present the classical 
definitions of Tonelli and Morse and some of the classical results on 
global calculus of variations of paths. This material is, of course 
classic, but we have included notes. 
In this paper, we will use mainly methods of calculus of variations, 
but in some points, we will make reference to the geodesic flow. 
Standard references  are  \cite{Paternain99,AbrahamM78,ArnoldA68}.
Some analysis at the Almgren-Federer example based on properties of the geodesic flow is in Appendix~\ref{sec:mather}.


\section{Geometric description of Almgren-Federer  example}

\label{sec:description}

Following the presentation in \cite{Federer74} we consider the sphere $\mathbb{S}^2$, written explicitly as: 
\begin{equation} \label{coordinates} 
\left\{~ (y, z) ~\big| ~ y\in [-1,1], z\in \mathbb{C}, y^2 + |z|^2 = 1 \right\}.
\end{equation} 

We also consider the circle $\mathbb{S}^1$ which we give 
the coordinate $x$.

We consider the manifold 
$\widetilde{\M}
 = \mathbb{R}\times \mathbb{S}^2$, as well as 
the quotient manifold 
\[
\M = \mathbb{R} \times \mathbb{S}^2 / \sim
\]where 
\begin{equation} 
\label{equivalence} 
(x+1,  y, z) \sim (x, y, z\ e^{2\pi i \omega})
\end{equation} 
for some $\omega\in \mathbb{R} - \mathbb{Q}$.

Clearly, the manifold $\widetilde{ \M}$ is the universal cover 
of $\M$ and $\M$ is diffeomorphic to 
$\mathbb{S}^1 \times \mathbb{S}^2$.  The only homology (or homotopy)
of $\M$ is associated to the $\mathbb{S}^1$ factor.

\begin{remark} 
For dynamicists, the manifold $\M$ is the suspension manifold
\cite{KatokH95, Robinson12}
  of 
the rotation in $\mathbb{S}^2$ given in the coordinates in 
\eqref{coordinates} by 
\begin{equation}  \label{rotation} 
(y,z) \mapsto (y, z\ e^{2 \pi i \omega}). 
\end{equation}

Following this intuition, for people with background in dynamics, it 
may be useful to imagine $\M$ as 
$\mathbb{S}^2$ evolving in time -- time  is the variable $x$ in
the $\mathbb{S}^1$ factor -- but coming back to itself rotated 
by the irrational rotation \eqref{rotation}.  Of course, other 
physical interpretations may be useful.

The effects of the example, may be understood in this interpretation
because we are
trying to approximate  a problem involving an irrational rotation
by periodic orbits. 
\end{remark} 

\begin{remark} 
We can consider $x, y, z$ coordinates of the manifolds also as functions (\cite{Federer74} makes a distinction between the coordinates and the 
functions returning the value of the coordinates,  but we will not 
make this distinction).
\end{remark} 

\begin{remark} The paper \cite{Federer74} formulates 
the quotient \eqref{equivalence}  slightly differently.
It fixes  $\omega  = 1/(2 \pi)$. 
\end{remark}

To describe the geometry in pictorial terms it is convenient to 
think of $\mathbb{S}^2$ embedded in $\mathbb{R}^3$ in such a way 
that $y$ is the vertical coordinate. 
We can use the geographical  notation and 
we will call the set $\{y= 0\}$, the equator and the sets
 $\{ y = cte\}$, the parallels. 
We will also find it convenient 
to introduce the longitude $\sigma$ by 
$z = (1 - y^2)^{1/2}e^{i \sigma}$. Note that in a parallel of 
constant $y$, the longitude ranges over the interval $[0, 2 \pi)$, 
with both ends identified. 

We will think of the coordinate $x$ as a time that advances.

\bigskip 

The paper \cite{Federer74} endows $\mathbb{R}\times \mathbb{S}^2$, 
  the universal cover of 
$\M$,  with a metric 
\begin{equation}\label{AFmetric} 
(1 +y^2)( dx^2 + dy^2 + dz^2)^{1/2}.
\end{equation}

The metric  \eqref{AFmetric} is invariant 
under rotations around the $y$-axis.
 It is also symmetric under translations in the $x$
axis. Hence, it is invariant  under the equivalence~\eqref{equivalence}
and, hence, it defines  a metric on $\M$.

Note that the coordinates $x, y, z$
are an orthogonal system of coordinates.

The important property for us  is that for a fixed $x$, the resulting
$\mathbb{S}^2$ has an  \emph{``hourglass''} shape. 
The length of a parallel at height $y \in [-1,1]$, is 
\[
\begin{split} 
2 \pi (1+y^2)\sqrt{(1 - y^2)} &= 2 \pi\sqrt{ (1+y^2)(1+y^2)(1 - y^2) } \\
&= 2 \pi \sqrt{ 1 + y^2 -y^4 -y^6}.
\end{split}
\]
The length of the parallel  starts growing with $y$ 
from the equator and then decreases. 
The same property will happen if we consider the standard metric in 
$\mathbb{S}^2$ multiplied by a suitable conformal factor depending on $y$.
For the purposes of the analysis here, the  \emph{hourglass shape} is the most important property of
the example. The conclusions are very robust since they depend mostly in
this property and the fact that the geodesics in the equator cannot be periodic.

\begin{figure}[htp]
\centering
\includegraphics[width=7cm,height=7.5cm]{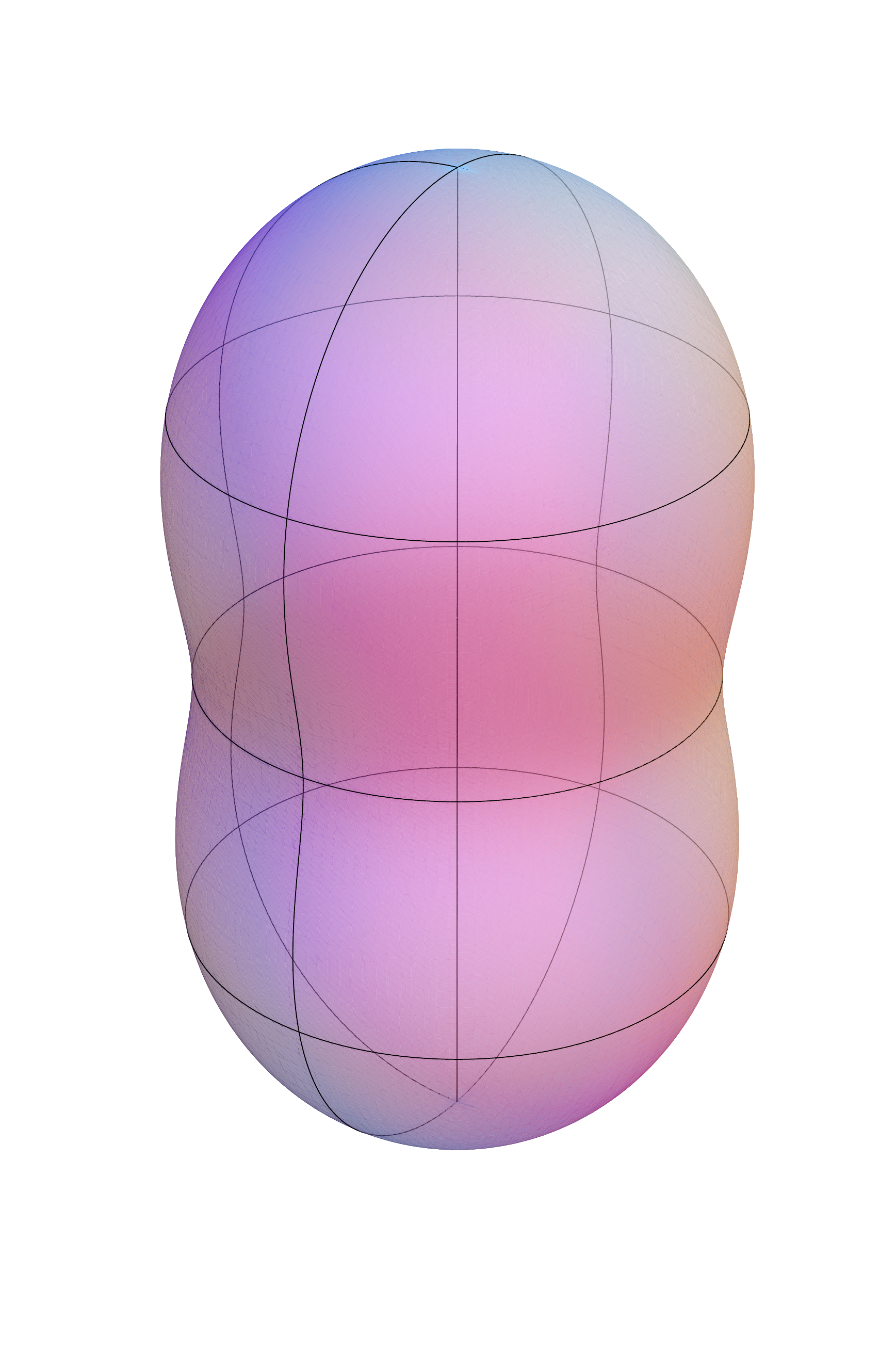}
\caption{Heuristic picture  of  the Almgren-Federer metric for 
a fixed $x$}
\label{eaxamplepicture}
\end{figure}

\section{Statement of results} 
\label{sec:statements} 

The main result on the example in \cite{Federer74} are
the following. We refer to Appendices~\ref{sec:morse} and~\ref{sec:tonelli} 
for the standard definitions  of Tonelli orbits 
(minimizers of length in a homotopy class) and 
the Class-A minimizing orbits (orbits that, in the universal cover 
are minimizers of distance between any pair of points in the orbit). 
See Definitions~\ref{def:tonelli} and~\ref{def:classA}.

\begin{theorem} 
\label{thm:tonelli} 
None of the Tonelli minimizing periodic orbits in 
Example \eqref{AFmetric} are Class-A minimizers.
\end{theorem}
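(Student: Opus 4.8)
The plan is to prove directly that \emph{every} periodic geodesic of the metric~\eqref{AFmetric}, and in particular every Tonelli minimizer, fails to minimize length over sufficiently long segments of its lift to $\widetilde{\M}=\mathbb{R}\times\mathbb{S}^2$, so it cannot be Class-A (Definition~\ref{def:classA}). The mechanism I want to exploit is that a Tonelli minimizer is \emph{periodic}, and periodicity is incompatible with the irrational rotation $\omega$ built into the gluing~\eqref{equivalence}: closing up in $\M$ forces a nonzero net winding of the longitude $\sigma$ per period, whereas in $\widetilde{\M}$ the longitude is only defined modulo $2\pi$ (the sphere is simply connected), so that any such winding can be ``unwound'' by a detour of bounded length.

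First I would set up the bookkeeping. Write the generating deck transformation as $T(x,y,z)=(x+1,\,y,\,z\,e^{-2\pi i\omega})$, so a closed orbit of winding number $p\in\mathbb{Z}\setminus\{0\}$ (we may take $p>0$ after reversing orientation) lifts to a segment from a point $P$ to $T^{p}P$. Parametrizing one period by $t\in[0,1]$ in the orthogonal coordinates $(x,y,\sigma)$, the length of one period is
\[
L=\int_0^1 (1+y^2)\sqrt{(x')^2+\tfrac{(y')^2}{1-y^2}+(1-y^2)(\sigma')^2}\;dt,\qquad \int_0^1 x'\,dt=p .
\]
The first key step is the strict inequality $L>p$. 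Indeed $L\ge\int_0^1(1+y^2)|x'|\,dt\ge\int_0^1|x'|\,dt\ge p$, and tracing the equality case forces $y'\equiv 0$, then $y\equiv 0$ (otherwise $(1+y^2)|x'|>|x'|$ strictly), and finally $\sigma'\equiv 0$. The second key step rules this out: computed along the continuous lift, the net change of $\sigma$ over one period satisfies $\Delta\sigma\equiv -2\pi p\,\omega \pmod{2\pi}$, since the endpoint is $T^{p}P$ and $T^{p}$ rotates the longitude by $-2\pi p\omega$; because $\omega\notin\mathbb{Q}$ and $p\neq 0$, this is never a multiple of $2\pi$, so $\Delta\sigma\neq 0$. (Orbits meeting the poles, where $\sigma$ is undefined, have $y\not\equiv 0$ and are already excluded by the equality analysis.) Hence $L>p$ for every periodic geodesic.

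Finally I would compare the $N$-fold segment with an explicit competitor. The $N$-fold lift runs from $P$ to $Q=T^{Np}P$ with length $NL$. Since $Q$ has the same $y$ as $P$, differs by $Np$ in $x$ and by $-2\pi Np\omega \bmod 2\pi$ in longitude, I connect $P$ to $Q$ by: dropping to the equator at fixed $x$; transporting in $x$ by $Np$ along the equator, where the conformal factor attains its minimal value $1$; rotating the longitude at the equator by the reduced angle $r\in(-\pi,\pi]$ with $r\equiv -2\pi Np\omega$; and climbing back to height $y$. The two vertical moves and the longitude correction are bounded by a universal constant $C$ (independent of $N$ and of the orbit), while the equatorial transport costs exactly $Np$, so
\[
\mathrm{dist}_{\widetilde{\M}}(P,Q)\le Np+C .
\]
Because $L>p$, for every $N>C/(L-p)$ we get $NL>Np+C\ge \mathrm{dist}_{\widetilde{\M}}(P,Q)$, so the segment is not minimizing and the orbit is not Class-A.

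The conceptual heart of the argument, and its main obstacle, is the matching between the crude lower bound $L\ge p$ and the competitor cost $Np+O(1)$: both rely on the \emph{waist of the hourglass sitting exactly at the equator}, where $(1+y^2)$ is minimal, so the equator is simultaneously the cheapest highway in the $x$-direction and a flat, totally geodesic torus (fixed by the isometry $y\mapsto -y$) on which the forced longitude winding is cheapest to unwind. The delicate technical points are the equality analysis giving $L>p$ strictly for \emph{all} periodic geodesics, including those dipping toward the poles, and the verification that the detour constant $C$ is genuinely uniform. Once these are in place, the irrationality of $\omega$ supplies the rest, guaranteeing $\Delta\sigma\neq 0$ and hence a strictly positive per-period surcharge $L-p$ that linear growth eventually converts into a definite failure of minimization.
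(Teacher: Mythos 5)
Your proof is correct and follows essentially the same route as the paper: the pointwise bound $|\dot\eta|_g \ge (1+y^2)|\dot x|$ whose equality case forces an equatorial, longitude-constant curve, the equatorial competitor of length $Np+O(1)$ joining a point to its translate under the deck transformation, and the irrationality of $\omega$ to exclude the equality case via the forced longitude winding. The only difference is packaging: you first extract the strict per-period surcharge $L>p$ and then choose a finite $N$, whereas the paper argues by contradiction, letting $n\to\infty$ to get $|\gamma_m|\le m$ and then contradicting this on the flat equatorial cylinder.
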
 

\begin{remark}
\cite{Hedlund32} constructed an example for 
which there are only three Tonelli Class-A minimizers. 
\end{remark}

It is elementary that 
 a periodic orbit which is Class-A is necessarily a Tonelli minimizer. Nevertheless, the main point of Almgren-Federer example 
is that Tonelli orbits may fail to be Class-A for some metrics. 

The reason for 
Tonelli minimizers failing to be Class-A  
 is that  the definition of 
Tonelli minimizers, requires that the length is minimal 
along the orbits in the same homotopy class. Nevertheless, when we go 
to the universal cover, we could consider multiples of the curve which 
wrap around many times. If some of these multiples is not a Tonelli minimizer
(i.e. one can deform the multiple to get a lower length orbit), then the orbit is not a Class-A minimizer.
Slightly more informally, a periodic orbit is Tonelli when it is stable
(the length cannot be decreased)under perturbations of the same period. To be Class-A it has to be stable   under perturbations of whatever period. 

The idea of the proof of Theorem~\ref{thm:tonelli} is 
to show that if $\gamma$ is a Tonelli minimizer of 
period $T$, it is also a periodic
curve  of period $n T $ for every  $n \in \mathbb{N}$. 
We will show  that there is always an $n$ so that the $nT$ periodic orbit
can be shortened by perturbations of length $nT$.
In other words: the orbits whose length cannot be lowered by perturbations of
a certain period are such that their length can be lowered by perturbations of
longer period.

Actually, we will prove a more general result  than Theorem~\ref{thm:tonelli}. 

\begin{theorem} 
\label{thm:classA} 
The only Class-A minimizing geodesics 
in Example~\eqref{AFmetric} are 
the geodesics  given in the universal cover by: 
\begin{equation}\label{classAorbits}
x(t) = t,\  y(t) = 0,\  z(t) =  z_0 .
\end{equation} 
\end{theorem}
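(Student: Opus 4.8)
The plan is to prove both inclusions: that the curves \eqref{classAorbits} are Class-A, and that no other geodesics are. For the first (easy) direction, I would note that $(x,y,z)\mapsto(x,-y,z)$ is an isometry of \eqref{AFmetric} (the conformal factor depends only on $y^2$), so its fixed-point set $\{y=0\}$ is totally geodesic; the metric induced there is $dx^2+d\sigma^2$, a flat cylinder, whose straight lines are exactly \eqref{classAorbits}. To see these are Class-A, I would use the pointwise bound, valid for any curve $\gamma(t)=(x(t),y(t),z(t))$ in $\widetilde{\M}$,
\[
L(\gamma)=\int (1+y^2)\sqrt{\dot x^2+\dot y^2+|\dot z|^2}\,dt\ \ge\ \int|\dot x|\,dt\ \ge\ |x(b)-x(a)|,
\]
with equality iff $y\equiv 0$, $\dot y\equiv\dot z\equiv 0$ and $\dot x$ has constant sign. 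Since a segment of \eqref{classAorbits} has length exactly $|x(b)-x(a)|$, it is the unique minimizer between its endpoints, hence Class-A.

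For the converse I would show that any Class-A geodesic $\gamma$ coincides with one of \eqref{classAorbits}. First, since $\partial_x$ is a Killing field, the momentum $(1+y^2)^2\dot x$ is conserved; as $(1+y^2)^2>0$, $\dot x$ never changes sign. If $\dot x\equiv 0$ then $\gamma$ lies in a fiber $\{x=\mathrm{const}\}\cong\mathbb{S}^2$ and is a closed (contractible) curve, which cannot be Class-A; so $\dot x$ has one sign, say $\dot x>0$, and is bounded above and below (unit speed together with $1\le(1+y^2)^2\le 4$). Hence $x\to\pm\infty$ and I may reparametrize $\gamma$ as a graph $x\mapsto(y(x),\sigma(x))$ over all of $\mathbb{R}$.

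The heart of the argument is a squeeze between a sharp lower bound and an equatorial competitor. Reparametrizing by $x$ while keeping the factor $(1+y^2)$ gives
\[
L(\gamma|_{[a,b]})\ \ge\ \int_{x(a)}^{x(b)}(1+y^2)\,dx\ =\ \big(x(b)-x(a)\big)+\int_{x(a)}^{x(b)} y^2\,dx.
\]
On the other hand, routing through the equatorial flat cylinder — drop into $\{y=0\}$ inside the fiber of $\gamma(a)$ (cost $\rho(\gamma(a))\to 0$ as $y\to 0$), move diagonally to the fiber of $\gamma(b)$ taking the short way in longitude ($|\Delta\sigma|\le\pi$), then rise to $\gamma(b)$ — yields
\[
d\big(\gamma(a),\gamma(b)\big)\ \le\ \sqrt{(x(b)-x(a))^2+\pi^2}\,+\,\rho(\gamma(a))+\rho(\gamma(b)).
\]
Because $\gamma$ is Class-A both sides equal $L(\gamma|_{[a,b]})$, so $\int_{x(a)}^{x(b)}y^2\,dx\le \tfrac{\pi^2}{2(x(b)-x(a))}+\rho(\gamma(a))+\rho(\gamma(b))$. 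This already forces $\int_{\mathbb{R}}y^2\,dx<\infty$, hence $\liminf_{x\to\pm\infty}y=0$; choosing $a_n\to-\infty$, $b_n\to+\infty$ along such near-equatorial points makes every term on the right vanish, leaving $\int_{\mathbb{R}}y^2\,dx=0$, i.e. $y\equiv 0$. A parallel comparison (the competitor never winds, so a persistently nonzero $\sigma'$ would make $L$ exceed $x(b)-x(a)$ by a definite factor) forces $\sigma\equiv\mathrm{const}$, completing the identification with \eqref{classAorbits}.

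The main obstacle is the upper-bound/competitor step. The delicate point is that the crude finiteness $\int y^2\,dx<\infty$ does not by itself single out the equator: the geodesic flow has homoclinic/heteroclinic orbits that leave the equatorial line, excurse over a pole, and return asymptotic to an equatorial line (with longitude shifted by $\pi$), and these also satisfy $\int y^2\,dx<\infty$. Ruling them out requires the competitor to beat them by taking the short way around in longitude along the totally geodesic equator, so that for long segments its length is $\sqrt{(x(b)-x(a))^2+\pi^2}=\big(x(b)-x(a)\big)+O\big(1/(x(b)-x(a))\big)$, which recovers the full positive excess $\int y^2\,dx$. Making this rigorous — controlling the fiber-to-equator distances $\rho$, verifying the flat-cylinder distance formula, and handling winding via the principal value of $\Delta\sigma$ — is where the real work lies.
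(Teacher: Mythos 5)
Your proof is correct, but it follows a genuinely different route from the paper's. The paper proves Theorem~\ref{thm:classA} through a chain of propositions: a finite budget for deviations (Propositions~\ref{totalbudget} and~\ref{notime}), a rearrangement argument exploiting the hourglass monotonicity of the parallel length to show that any excursion out of $|y|\le\delta$ must reach $|y|\ge\sqrt{3}/3$ (Proposition~\ref{jumps}), a crossing lemma forcing $y\to 0$ at both ends, and finally the Noether integrals of the geodesic flow to exclude orbits with angular momentum $L\ne 0$ together with a planar argument for $L=0$. You instead squeeze the quantity $\int y^2\,dx$ between the pointwise lower bound $L(\gamma|_{[a,b]})\ge\int(1+y^2)\,dx$ (legitimate once the conserved momentum $(1+y^2)^2\dot x$ gives $\dot x$ a fixed sign and $x\to\pm\infty$) and the explicit equatorial competitor of length at most $\sqrt{(\Delta x)^2+\pi^2}+\rho(\gamma(a))+\rho(\gamma(b))$, then send the endpoints to $\mp\infty$ along near-equatorial times to get $\int_{\mathbb{R}}y^2\,dx=0$, i.e.\ $y\equiv 0$, after which the flat-cylinder comparison kills any winding in $\sigma$. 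This is leaner: it never uses the hourglass property nor the $L\ne 0$ versus $L=0$ dichotomy, it is quantitative in a single comparison, and it also supplies the easy inclusion (that the curves \eqref{classAorbits} are Class-A via $L(\gamma)\ge|\Delta x|$ with equality analysis), which the paper leaves implicit; what the paper's route buys is precisely its pedagogical aim, illustrating density-estimate style arguments and the use of the conserved quantities of the geodesic flow. Two small points in your write-up: a geodesic with $\dot x\equiv 0$ need not be a closed curve, but it is confined to a compact fiber of the universal cover, which already contradicts Class-A (bounded distance between endpoints, unbounded length), so the conclusion stands with that correction; and the concern in your final paragraph about pole-crossing homoclinic or heteroclinic orbits is already dispatched by your own squeeze, since choosing near-equatorial endpoint sequences yields $\int y^2\,dx=0$ exactly rather than merely finite, so no additional argument is needed there.
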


Note that each of the orbits in \eqref{classAorbits} is dense 
on the equator of $\M$. Note that, every time we increment $x$ 
by $1$, the identification \eqref{equivalence} makes the orbit to come
back identified 
with an irrational rotation. Therefore, repeating the
argument, we obtain that the orbits are dense in the equator. 
On the other hand, the speed is constant and concentrated in the direction of 
incrementing only $x$.

Note that the Class-A orbits 
in \eqref{classAorbits} form a one-dimensional family. 
In particular, they are not isolated and hence, cannot be hyperbolic. 
This is in contrast with the Class-A orbits in Hedlund example 
which are hyperbolic and have  very interesting shadowing properties.
These shadowing properties are 
established using methods from hyperbolic systems in 
\cite{Levi97}. (It seems that they could perhaps 
be established by variational methods 
\cite{Bessi, Bolotin,Cheng,Paternain}). 
The relation 
between hyperbolicity and minimization is a very interesting 
question since both properties lead to shadowing, see 
\cite{Offin}.  We also mention that relations between 
minimizing sets and hyperbolicity have been explored for 
generic systems specially in low dimensions \cite{LeCalvez88,
contreras, Arnaud}.

\begin{remark}
It is worth remarking for experts that our considerations 
differ in some aspects from those
of \cite{Federer74} and this affects some of 
the proofs and some of the considerations. 

We consider geodesics, whereas \cite{Federer74} considers flat chains. The main result on existence of minimizers for geodesic is Tonelli's Theorem
which asserts the existence of minimizers in a homotopy class.

Note that in contrast, in other theories of minimization among more
complicated objects (e.g. measures in J. Mather's theory or
flat chains in geometric measure theory), one cannot talk of homotopy, but only about homology. 
See Appendix \ref{sec:mather}.

For manifolds like the ones we consider
in the Almgren-Federer example, homotopy and homology coincide, but
in more general cases, the homotopy and homology can be very different. 

Our proof of Theorem~\ref{Tonelli theorem} is significantly simpler
than the proof of the corresponding results in \cite{Federer74}. The
reason being that the flat chains that minimize a fixed period could,
in principle be more complicated objects than closed geodesics and the
symmetry breaking cannot be and the symmetry breaking requires more
sophisticated constructions. See \cite[Section 5.11]{Federer74}. Of
course, the ideas of the argument presented in this exposition
come from \cite{Federer74}. 

\end{remark}

\section{Proof of  Theorem~\ref{thm:tonelli}}
\label{sec:tonelliproof} 
We will assume that there indeed exists a Tonelli Class-A minimizer and 
then prove that it so has several contradictory properties. Hence, 
no such a thing exists.

If  there were $\gamma_m$ a curve of homotopy 
$m \in \mathbb{Z}$ which is Tonelli and Class-A, 
by repeating it enough, we could get 
an orbit $n\gamma_m$ 
that moves in the universal cover from $x=0,y= y_0 , z_0$ to 
$x = nm, y= y_0, z_0e^{2\pi i n m \omega}$.

We can compare this curve with the curve  going from 
$(0, y_0, z_0)$ to $(0, 0, z_0)$ along a meridian, then 
going from $(0,0, z_0)$ to $(nm, 0, z_0)$ and then going to a $(nm, y_0, z_0)$ 
along a meridian. We see that the length of this curve is less 
than  $nm + 2 B$, where $B$ is an upper bound of 
the length of the segments of meridian used above as the initial 
and final segments.

The length of the $n$ multiple of  $\gamma_m$, 
the Tonelli orbit of homotopy $m$ 
is $n|\gamma_m|$.

 If the Tonelli orbit was a Class-A we 
would have that the length of its  $n$ cover would be smaller than the length 
of the trial orbit described above. Hence, if the Tonelli orbit $\gamma_m$
were Class-A, we would have
\[ 
n|\gamma_m| \le nm + 2B
\]
where $B$ is as above. 

Therefore, taking limits as $n$ grows to infinity, we obtain 
$|\gamma_m| \le m$. 

On the other hand, we have that $\gamma_m$ increases the $x$ coordinate 
by $m$. Since the metric \eqref{AFmetric} is bigger than 
$(1 + y^2) (dx^2)^{1/2}$ we obtain that the $|\gamma_m| \ge m $ and 
that this bound can only  be saturated when the segment is contained 
in the equator. See later Proposition~\ref{totalbudget} for a more
detailed argument. 

The lift of equator $\mathcal{E}$ in the universal cover of $\M$, is a cylinder 
$ \widetilde{\mathcal{E}} = \mathbb{S}^1 \times \mathbb{R}$.~\footnote{Note that $\widetilde{\mathcal{E}}$ is topologically non-trivial 
and homotopically non-trivial paths. Of course, these paths 
are homotopically trivial in $\widetilde{M}$. }
It is a flat cylinder (the universal cover of $\mathcal{E}$ is 
the flat plane).  
So that the orbits of minimal length are precisely the 
straight lines in the plane.  In particular, the minimizing geodesics
connecting two points are unique. 

Now, we can give several arguments to show 
that such $\gamma_m$ does not exist. 

We note that, because  $\omega \notin \mathbb{Q}$, the straight line 
connecting $(0,0,z_0)$ to $(0, 0, z_0 e^{2 \pi i m \omega})$ has length strictly 
bigger than $m$. 

Alternatively, observe that because the rotation $\omega$ is irrational, we have 
that straight lines connecting $(0, 0, z_0)$ and $(m,0, z_0 e^{2\pi i m \omega })$
are different for different $m$, so that the multiples of the minimizing orbit do 
not agree with each other. 

\begin{remark} 
Note that the above arguments use essentially that 
the equator, in spite of being a cylinder, carries no topology in 
the angle, since the turns around the angle can be undone by moving over 
the pole so that all the straight lines in the equator are homotopic 
in $\M$. 

If it was not because of the possibility of using  homotopy in the 
two dimensional sphere we would not be able
to verify the properties of the example. 

It would be tempting to try to do 
a similar construction in $\mathbb{T}^2$ by adding a twist in one
of the coordinates. This does not work because the topology 
$\mathbb{T}^2$ makes its lines with different slopes non-homotopic. 
Of course, such construction would contradict the results of 
\cite{Hedlund32}.
\end{remark}

\begin{remark}  
Given the characterization of the Tonelli Class-A minimizers, 
we can take $n$ so that the $e^{2 \pi i n m \omega}$ is arbitrarily close 
to $1$. When this factor is very close to $1$, the orbit connecting
a point $(0,0, z_0)$  to its 
translate $(0,0, z_0e^{2 \pi i n m \omega})$,  can be made 
shorter than the cover of the Tonelli orbit.

This makes concrete the fact that every Tonelli orbit has a cover 
which is not a minimizer.

Note that we if indeed $e^{2 \pi i n m \omega}$ is very close to $1$,
then, $e^{2 \pi i (n+1) m \omega}$ will be close to
$e^{2 \pi i m \omega}$, so that we can arrange also that the minimizers in
the $n+1$ cover approximate the original Tonelli orbit in segments 
of long length. 
\end{remark}

\begin{remark} 
One can think heuristically that the mechanism at play is the conflict between 
the natural minimizers which are the straight lines in the equator
and the fact that there is a twist given by \eqref{equivalence} 
which prevents the straight lines from being periodic. 
\end{remark}

\subsection{Relations between the topology and the existence of Class-A  geodesics}

The following result provides a further tie between Tonelli's theory, Morse's theory of Class-A geodesics and the topology of the manifold.
\begin{theorem}
  For any smooth metric in
  $\M = \mathbb{S}^1 \times \mathbb{S}^d$, $(d\geq 2)$, there is a Class-A geodesic.
\end{theorem}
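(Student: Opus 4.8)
The plan is to construct a Class-A geodesic directly as a limit of minimizing segments whose endpoints recede to infinity in the universal cover, following the classical scheme of Morse and Hedlund. Since $d\geq 2$, the sphere $\mathbb{S}^d$ is simply connected, so the universal cover of $\M=\mathbb{S}^1\times\mathbb{S}^d$ is $\widetilde{\M}=\mathbb{R}\times\mathbb{S}^d$, with deck group $\mathbb{Z}=\langle T\rangle$ acting by $T(x,p)=(x+1,p)$. I lift the given metric to a $\mathbb{Z}$-invariant metric on $\widetilde{\M}$, and write $\pi:\widetilde{\M}\to\M$ for the projection and $d(\cdot,\cdot)$ for the lifted distance.

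First I would record a linear lower bound for the cost of advancing in $x$. By compactness of the quotient $\M$, there is a constant $c>0$ such that any path crossing one unit slab $\{a\le x\le a+1\}$ has length at least $c$; summing over consecutive slabs gives $d(\{x=\alpha\},\{x=\beta\})\geq c|\beta-\alpha|$. This estimate guarantees that minimizing segments between widely separated $x$-slices are long, and it is precisely what will force the limiting geodesic to be complete rather than to degenerate into a bounded arc.

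Next, fix $\tilde p_0=(x_0,p_0)$ and, for each $n\in\mathbb{N}$, let $\sigma_n$ be a unit-speed minimizing geodesic in $\widetilde{\M}$ from $T^{-n}\tilde p_0$ to $T^{n}\tilde p_0$; such a minimizer exists because $\widetilde{\M}$ is complete. By the previous step its length is at least $2cn$, hence tends to infinity. Each $\sigma_n$ meets the slice $\{x=x_0\}$, which is a copy of the compact manifold $\mathbb{S}^d$; I choose such a crossing point $q_n$ and reparametrize so that $\sigma_n(0)=q_n$. Because the $q_n$ lie in a fixed compact set, and because the lifted metric is invariant under the cocompact $\mathbb{Z}$-action and therefore has bounded geometry, the Arzel\`a--Ascoli theorem applied to the geodesic ODE yields a subsequence converging in $C^1_{\mathrm{loc}}$ to a complete unit-speed geodesic $\sigma:\mathbb{R}\to\widetilde{\M}$. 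The linear growth of the lengths of the $\sigma_n$, combined with $q_n$ staying in a compact set, is exactly what ensures that the limit is defined on all of $\mathbb{R}$.

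Finally I would verify that $\sigma$ is globally minimizing, which I expect to be the only delicate point. Given $s<t$, the restriction $\sigma|_{[s,t]}$ is the $C^1_{\mathrm{loc}}$ limit of the sub-segments $\sigma_n|_{[s,t]}$, each of which minimizes length between its endpoints. If some curve joined $\sigma(s)$ to $\sigma(t)$ with strictly smaller length, then for $n$ large the convergence of endpoints would let me splice a nearby competitor between $\sigma_n(s)$ and $\sigma_n(t)$, producing a path shorter than $\sigma_n|_{[s,t]}$ and contradicting its minimality. Hence every finite arc of $\sigma$ minimizes distance between its endpoints, so $\sigma$ is Class-A in the sense of Morse and $\pi\circ\sigma$ is the desired Class-A geodesic in $\M$. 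The main obstacle is making this last splicing and lower-semicontinuity argument precise while controlling the error introduced by replacing the endpoints of $\sigma|_{[s,t]}$ with those of $\sigma_n|_{[s,t]}$; the uniform bounded geometry furnished by cocompactness is what keeps that error under control.
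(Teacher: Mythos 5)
Your proposal is correct and follows essentially the same route as the paper: minimizing segments whose endpoints recede to $\pm\infty$ in the universal cover $\mathbb{R}\times\mathbb{S}^d$, an Arzel\`a--Ascoli extraction in $C^1_{\mathrm{loc}}$ anchored by the fact that all segments cross the fixed compact slice $\{x=x_0\}\cong\mathbb{S}^d$, and a comparison/splicing argument showing the limit inherits the minimizing property, i.e.\ is Class-A. The only cosmetic difference is that you minimize between the point pairs $T^{-n}\tilde p_0$, $T^{n}\tilde p_0$ (via completeness of the cover) rather than between the slices $\{\pm n\}\times\mathbb{S}^d$ obtained in the paper through Tonelli's theorem, which changes nothing essential in the argument.
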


Of course, the case $d=1$ is proved in \cite{Hedlund32} but the proof is very different from the proof presented here. Indeed, the proof presented
only works for $d \ge 2$. 

In general manifolds, the existence of Class-A minimizers is established in 
\cite{Mather91} (the orbits in the support of 
a minimizing measure are Class A). See also 
\cite{Bernard02}, but the proof requires sophisticated tools.

\begin{proof}
The universal cover of $\M$ is $\tilde{\M} = \mathbb{R}\times\mathbb{S}^d$ (here is where we use $d\geq2$).

Proceeding as in Tonelli's theorem, we can obtain a shortest geodesic
$\gamma_n$ by joining $\{-n\} \times \mathbb{S}^d$ to
$\{n\}\times \mathbb{S}^d$. (We can use Tonelli's theorem to obtain a
periodic curve which minimizes the length among periodic curves of
homotopy $2n$. Shift it back by $n$.)

All the $\gamma_n$ curves intersect $\{0\} \times \mathbb{S}^d$. We
observe that all the curves $\gamma_n$ are uniformly twice
differentiable because they satisfy the Euler-Lagrange equation for
geodesics. 

Hence, by Ascoli-Arzel\`a Theorem,
we can get a
subsequence $\gamma_{n_i} \rightarrow \gamma$
uniformly on compact
sets in the $C^1$ sense.

Following an argument in \cite{Morse24}, we show that $\gamma$ is a Class-A minimizer.
Assume that $\gamma$ was not a Class-A minimizer. Then, there is a $\tilde{\gamma}$ and $N\in \mathbb{N}$ which agrees with $\gamma$ outside of $[-N, N]\times \mathbb{S}^d$ such that
\[
\left| \gamma|_{[-N, N] \times \mathbb{S}^d}\right| \leq \left| \tilde{\gamma}|_{[-N, N] \times \mathbb{S}^d}\right| + \delta \qquad \text{ for some }\delta>0.
\]
Because of the convergence, we can find $n$ (without loss of generality, we assume $n\geq N$) such that 
\[
\bigg| \big| \gamma_n|_{[-N, N] \times \mathbb{S}^d} \big| - \big| \gamma|_{[-N, N] \times \mathbb{S}^d}\big| \bigg|  \leq \frac{\delta}{4}.
\] 
Hence
\[
\left| \gamma_n|_{[-N, N] \times \mathbb{S}^d}\right| \geq \left| \tilde{\gamma}_n|_{[-N, N] \times \mathbb{S}^d}\right| + \frac{3}{4}\delta.
\]
Therefore, we have that $\gamma_n$ would not be a minimizer for the length.
\end{proof}

Note that this proof works because of the topology of the manifold we use essentially that the universal cover has the structure of a line cross a compact manifold.

\section{Characterization of Class-A orbits in
the Almgren-Federer example.  Proof of Theorem~\ref{thm:classA}}
\label{sec:classA}

Theorem~\ref{thm:classA} will be proved by a sequence of propositions that keep on restricting the possibilities of Class-A geodesics.

The argument is very typical of similar arguments in calculus of
variations: We first show that there is a finite total budget for
deviations. Secondly, we prove that the fluctuations have to be
concentrated, each of them takes a substantial amount of the budget,
hence there are only finitely many of them. Finally, we show that the
fluctuations are none.

\subsection{A finite total budget for deviations}
\begin{proposition}\label{totalbudget}
Let $\eta_n$ be a segment of a Class-A geodesic joining a point of the form $(0, y_1, z_1)$ to another point $(n, y_2, z_2)$. Then, $|\eta_n| \leq n+ 2B$ where $B= \text{diam} \{(0,y,z) ~:~ y\in  [-1,1], y^2+ |z|^2 =1 \}$.
\end{proposition}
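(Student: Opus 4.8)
The plan is to exploit directly the defining property of a Class-A geodesic (Definition~\ref{def:classA}): the segment $\eta_n$ minimizes length in the universal cover $\widetilde{\M}$ among all curves sharing its endpoints $(0,y_1,z_1)$ and $(n,y_2,z_2)$. Hence it suffices to exhibit a single competitor curve joining these two points whose length is at most $n+2B$; minimality of $\eta_n$ then immediately gives $|\eta_n|\le n+2B$. So the whole proof reduces to producing one good trial curve.

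First I would build the competitor $c$ as a concatenation of three pieces. Fix any point $w$ on the equator, i.e. $w\in\mathbb{C}$ with $|w|=1$. The first piece stays in the fiber $\{x=0\}\cong\mathbb{S}^2$ and joins $(0,y_1,z_1)$ to the equator point $(0,0,w)$; the second piece runs along the lifted equator $\widetilde{\E}$ from $(0,0,w)$ to $(n,0,w)$, increasing only the coordinate $x$ while keeping $y=0$ and $z=w$ fixed; the third piece stays in the fiber $\{x=n\}\cong\mathbb{S}^2$ and joins $(n,0,w)$ to $(n,y_2,z_2)$. By construction the endpoints of $c$ coincide with those of $\eta_n$, so $c$ is an admissible competitor.

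Next I would estimate the three lengths. For the middle piece, on the equator $y=0$ the conformal factor $(1+y^2)$ in the metric \eqref{AFmetric} equals $1$ and only the coordinate $x$ varies, so the length element reduces to $dx$ and this piece has length exactly $n$. Each of the two outer pieces lies in a single fiber, so its length is bounded by the diameter of that fiber; since the metric \eqref{AFmetric} is invariant under translation in $x$, every fiber has the same diameter $B$ as the fiber $\{x=0\}$, giving length $\le B$ for each outer piece. Adding up, $|c|\le B+n+B=n+2B$, and minimality of $\eta_n$ finishes the argument.

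The argument has essentially no hard step; the only points requiring care are (i) checking that the two outer excursions are genuinely bounded by $B$ --- this uses that $B$ is the full diameter of a fiber, so that for any chosen equator point $w$ both pairs $\{(0,y_1,z_1),(0,0,w)\}$ and $\{(n,0,w),(n,y_2,z_2)\}$ lie in one fiber and are therefore at distance $\le B$, together with $x$-translation invariance to transport the bound from the fiber $\{x=0\}$ to the fiber $\{x=n\}$ --- and (ii) verifying that the middle piece costs exactly $n$, which is precisely the flatness of the equator noted earlier, since the lift $\widetilde{\E}$ is a flat cylinder on which a curve advancing only in $x$ has unit speed.
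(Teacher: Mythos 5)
Your proposal is correct and follows essentially the same route as the paper: a three-piece trial curve (drop to the equator in the fiber $\{x=0\}$, run along the flat equator for length exactly $n$, climb back up in the fiber $\{x=n\}$), with the two outer pieces bounded by $B$ and the Class-A property giving $|\eta_n|\le n+2B$. The only cosmetic difference is that you allow an arbitrary equator point $w$ while the paper descends along the meridian through $z_1$, which changes nothing.
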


\begin{proof}
Just consider the curve formed by three segments as follows:
The first segment joins $(0, y_1, z_1)$ to $(0, 0, z_1)$, the second one joins $(0, 0, z_1)$ to $(n, 0, z_1)$ and the third segment joins $(n, 0, z_1)$ to $(n, y_2, z_2)$.

The first and third segments have length less than $B$ and the second one has length $n$.

Since $\eta_n$ is part of a Class-A geodesic, its length should be less than the trial segment.
\end{proof}

\begin{proposition}\label{notime}
Let $\eta_n (t) = (x(t), y(t) , z(t))$ be a segment of a Class-A geodesic as in Proposition~\ref{totalbudget}. Let $\delta>0$. Then, the set $O_{\delta, n} = \{ t~:~ |y(t)| \geq \delta \} \subset \mathbb{R}$ has measure
\begin{equation}\label{oscillationbound}
\left| O_{\delta, n}  \right|  \leq 2 \frac{B}{\delta^2}  ,
\end{equation}
where $|O_{\delta, n}|$ denotes Lebesgue measure of the set $O_{\delta, n}$.
\end{proposition}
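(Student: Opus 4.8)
The plan is to convert the finite length budget of Proposition~\ref{totalbudget} into a quantitative bound on the size of the parameter set on which the curve strays from the equator, and then to read off the measure estimate by a Chebyshev-type inequality. The guiding observation is that the metric \eqref{AFmetric} dominates $(1+y^2)\,|dx|$, so any excursion away from $\{y=0\}$ is charged in the length an amount measured by $y^2$; a bounded total budget can therefore pay only for a small set of large excursions.

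Concretely, I would parametrize $\eta_n$ by its $x$-coordinate, so that $t=x$ runs over $[0,n]$ and $\eta_n(x)=(x,y(x),z(x))$. With this choice the length is
\[
|\eta_n| = \int_0^n (1+y(x)^2)\sqrt{1 + y'(x)^2 + |z'(x)|^2}\,dx \geq \int_0^n (1+y(x)^2)\,dx = n + \int_0^n y(x)^2\,dx,
\]
where I have simply discarded the nonnegative terms $y'^2$ and $|z'|^2$ under the square root. Comparing this lower bound with the upper bound $|\eta_n|\leq n+2B$ furnished by Proposition~\ref{totalbudget} gives at once the budget inequality
\[
\int_0^n y(x)^2\,dx \leq 2B.
\]

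The conclusion is then immediate. On $O_{\delta,n}=\{x:|y(x)|\geq\delta\}$ one has $y(x)^2\geq\delta^2$, hence
\[
\delta^2\,|O_{\delta,n}| \leq \int_{O_{\delta,n}} y(x)^2\,dx \leq \int_0^n y(x)^2\,dx \leq 2B,
\]
and dividing by $\delta^2$ yields precisely \eqref{oscillationbound}. The length estimate and the Chebyshev step are both routine; the one point that genuinely requires care, and which I expect to be the main (if minor) obstacle, is the legitimacy of the $x$-parametrization, i.e.\ the monotonicity of $x$ along the geodesic. This is where the Class-A (minimizing) hypothesis enters: a geodesic that backtracked in $x$ could be shortened, so $x$ is monotone and the change of variables above is valid. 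Once this is noted, the quadratic-in-$y$ cost of leaving the equator does all the work, and the estimate is sharp in the sense that it reproduces the constant $2B$ exactly.
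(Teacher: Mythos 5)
Your proof is correct and takes essentially the same route as the paper's: both rest on the pointwise bound $|\dot{\eta}_n|_g \ge (1+y^2)\,|\dot{x}|$, the budget $|\eta_n| \le n + 2B$ from Proposition~\ref{totalbudget}, and a Chebyshev-type splitting over the set where $|y|\ge\delta$. Your explicit reparametrization by $x$ (with the monotonicity of $x$, which can indeed be justified either by the shortening you sketch or by conservation of the momentum $P=(1+y^2)^2\dot{x}/E$) merely makes explicit a normalization that the paper's own proof uses implicitly in its final equality.
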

\begin{proof}
It will be important that the bound~\eqref{oscillationbound} is uniform in $n$.
The proof is just to observe that 
\[
| \dot{\eta}_n (t) |_g \geq \left(1+ y^2(t) \right) \  |\dot{x}(t)|.
\]
By Proposition~\ref{totalbudget}, we have 
\[
\begin{split}
n+ 2 B &\geq \int_0^T \left| \dot{\eta}_n(t) \right| \ dt\\
&\geq  \int_0^T \left( 1+ y^2(t) \right) \ |\dot{x}(t)| \ dt\\
&\geq (1+ \delta^2) \int_{O_{\delta, n}}  |\dot{x}(t)| dt + \int_{[0, T] \setminus O_{\delta,n}} |\dot{x}(t)| \  dt\\
&= n + \delta^2 \  | O_{\delta,n}|. \qedhere
\end{split}
\]
\end{proof}

\subsection{Possible fluctuations for a finite number of intervals}
The next step shows that if there is a deviation from the equator it has to be a substantial one.
This is very analogue to density estimates in measure theory.

The key idea is that separating from the equator makes one to pay for deviating from the equator in the $\dot{x}$ component. The only way that this can pay off is if the penalty by measuring the weight of $\dot{x}$ is offset by derivatives in $\dot{z}$ component. This only happens if we get close to the north/south pole.

The following elementary side calculation 
makes more precise the \emph{hourglass shape} of the manifold $\M$. 
We recall that the length of a parallel at height $y$ is 
$2\pi(1+y^2) \sqrt{1- y^2}$. 
Write $u= y^2$ and let $f(u)= (1+u) \sqrt{1-u}$ and then,
\[
\begin{split}
f'(u) &= \sqrt{1-u} - \frac{1}{2} \frac{1+u}{\sqrt{1-u}}\\
&= \sqrt{1-u} \left( 1-u - \frac{1}{2} u -\frac{1}{2}  \right)
= \sqrt{1-u} \left( \frac{1}{2} - \frac{3}{2} u \right)
\end{split}
\]
So the function $f$ is increasing when $u\leq \frac{1}{3}$, that is, $|y| \leq \frac{\sqrt{3}}{3}$.

We find it convenient to write $z = \sqrt{1-y^2} e^{i \sigma}$ where $\sigma$ is the circle and it can be thought of as the longitude. Then,
\begin{equation}
\dot{z}(t) = \sqrt{1-y^2} e^{i \sigma(t)} i \dot{\sigma}(t) - \frac{y\ \dot{y}(t)}{\sqrt{1-y^2}} e^{i \sigma(t)} .
\end{equation}
So,
\[
|\dot{z}(t)| = \sqrt{ \frac{y^2}{1-y^2} \ \dot{y}^2(t) + (1-y^2) \ \dot{\sigma}(t)^2} 
\]
and the length is
\begin{equation}
\sqrt{(1+y^2)^2 |\dot{x}|^2  + \frac{(1+y^2)^2}{1-y^2} |\dot{y}|^2 + (1+y^2)^2 (1-y^2) \  \dot{\sigma}^2 }.
\end{equation}

\begin{proposition}\label{jumps}
Let $0<\delta \ll 1$. Consider any segment $\eta(t) =(x(t), y(t), z(t))$ that joins $(x_1, y=\delta, z_1)$ to $(x_2, y=\delta, z_2)$.
Assume $\delta \leq |y(t)| \leq \frac{\sqrt{3}}{3}$. Then, the segment $\eta(t)$ is not part of a Class-A geodesic.
\end{proposition}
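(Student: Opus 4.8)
The plan is to shorten $\eta$ by \emph{projecting it radially onto the parallel} $\{y=\delta\}$ and then showing that this projection is strictly shorter, so that $\eta$ cannot minimize the length between its endpoints and hence cannot lie on a Class-A geodesic. Writing $\eta(t)=(x(t),y(t),z(t))$ with $z(t)=\sqrt{1-y(t)^2}\,e^{i\sigma(t)}$, I would introduce the competitor
\[
\bar\eta(t)=\bigl(x(t),\ \delta,\ \sqrt{1-\delta^2}\,e^{i\sigma(t)}\bigr),
\]
which keeps the same $x$- and $\sigma$-components and flattens the $y$-component to the constant $\delta$. Since $\eta$ has both endpoints on $\{y=\delta\}$ (so that $z_1=\sqrt{1-\delta^2}e^{i\sigma_1}$ and likewise for $z_2$), the curves $\eta$ and $\bar\eta$ have the same endpoints in the universal cover, and $\bar\eta$ is a legitimate competitor for the minimization defining Class-A geodesics.

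The heart of the matter is a termwise comparison of the two integrands using the length formula displayed just before the statement. For $\bar\eta$ one has $\dot y\equiv0$, so
\[
|\dot{\bar\eta}|^2=(1+\delta^2)^2\,\dot x^2+(1+\delta^2)^2(1-\delta^2)\,\dot\sigma^2,
\]
whereas
\[
|\dot\eta|^2=(1+y^2)^2\,\dot x^2+\frac{(1+y^2)^2}{1-y^2}\,\dot y^2+(1+y^2)^2(1-y^2)\,\dot\sigma^2.
\]
I would then compare the three weights. The $\dot x$ weight obeys $(1+y^2)^2\ge(1+\delta^2)^2$ because $y^2\ge\delta^2$; the $\dot y$ term of $\eta$ is nonnegative and is simply absent from $\bar\eta$; and, crucially, the $\dot\sigma$ weight obeys $(1+y^2)^2(1-y^2)\ge(1+\delta^2)^2(1-\delta^2)$. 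This last inequality is precisely the monotonicity recorded above: it reads $f(y^2)^2\ge f(\delta^2)^2$ with $f(u)=(1+u)\sqrt{1-u}$, valid because $f$ is increasing on $\{u\le\frac13\}$ and $\delta^2\le y^2\le\frac13$. Summing the three contributions yields $|\dot{\bar\eta}(t)|\le|\dot\eta(t)|$ pointwise, hence $|\bar\eta|\le|\eta|$.

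To upgrade this to a strict inequality I would note that a sub-arc of a geodesic is itself a geodesic arc, while the parallel $\{y=\delta\}$ is \emph{not} a geodesic for $0<\delta<\frac{\sqrt3}{3}$: substituting $\dot y\equiv0$ into the $y$-component of the Euler--Lagrange equation gives $0$ on the left but $\frac12\bigl(\partial_y(1+y^2)^2\,\dot x^2+\partial_y[(1+y^2)^2(1-y^2)]\,\dot\sigma^2\bigr)>0$ on the right, since both $y$-derivatives are strictly positive on $(0,\frac{\sqrt3}{3})$ and a nonconstant arc has $(\dot x,\dot\sigma)\not\equiv0$. Therefore $y$ is not identically $\delta$ on $\eta$, so by continuity of $\dot y$ the open set $\{\dot y\ne0\}$ is nonempty and has positive measure; on it the term $\frac{(1+y^2)^2}{1-y^2}\dot y^2$ is strictly positive, forcing $|\dot{\bar\eta}|<|\dot\eta|$ there and hence $|\bar\eta|<|\eta|$. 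This contradicts the minimizing property of Class-A geodesics, so $\eta$ is not part of one.

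The step I expect to require the most care is exactly this strictness dichotomy: the pointwise estimate only delivers $\le$, and one must rule out the degenerate possibility that $\eta$ is confined to the parallel $\{y=\delta\}$, on which the projection does nothing. The clean way to dispose of it is the Euler--Lagrange observation above, that such a parallel is not a geodesic, so a geodesic arc cannot lie entirely on it. By contrast, all three coefficient inequalities are routine once the monotonicity of $f$ on $[0,\frac13]$ — already established in the side calculation — is invoked.
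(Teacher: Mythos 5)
Your proof is correct and follows essentially the same route as the paper: your competitor $\bar\eta$ is exactly the paper's ``rearranged path'' (projection onto the parallel $y=\delta$ keeping $x$ and $\sigma$ unchanged), and your termwise comparison of the three coefficients via the monotonicity of $f(u)=(1+u)\sqrt{1-u}$ on $[0,\tfrac{1}{3}]$ is the paper's argument. The only difference is that you also spell out the strictness step (a geodesic arc cannot be confined to $\{y=\delta\}$ because the $y$-component of the Euler--Lagrange equation fails there), which the paper leaves implicit; this is a welcome completion rather than a different approach.
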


The proof of Proposition~\ref{jumps} is very simple. 
We consider the rearranged path $\tilde{\eta}(t)$ defined by
\begin{equation}
\begin{split}
\tilde{x}(t) &= x(t);\\
\tilde{y}(t) &= \delta;\\
\tilde{z}(t) &= \frac{\delta}{y(t)} z(t).
\end{split}
\end{equation} Hence $\tilde{\sigma}(t) = \sigma(t)$.

It is clear that the rearranged path $\tilde{\eta}(t)$ and the original path $\eta(t)$ have the same original and final points and that the coefficients of the derivatives in the expression of the length have decreased.
\qed

As an immediate corollary of Proposition~\ref{jumps}, we obtain that if a Class-A geodesic leaves the region $|y| \leq \delta$, then it has to reach $|y| \geq \frac{\sqrt{3}}{3}$. Since the Class-A geodesics satisfy the geodesic equation, these excursions have to take a time that is bounded from below.
Note that this time is independent of $\delta$.

Therefore, so far, we have showed that all the possible Class-A geodesic stay within $|y|< \delta$ except for a finite number of intervals, in which they reach $|y| >\frac{\sqrt{3}}{3}$. This number is independent of the length considered. Therefore, for any $\delta>0$, the Class-A geodesics will have to include infinite intervals where $|y| < \delta$ and the intervals will be of the form $[a_+, \infty)$ and $(-\infty, a_-]$. 

Of course, we will take $\delta$ small enough such that the factor $(1+y^2) \sqrt{1-y^2}$ is monotone in $y^2$ for $y^2 < \delta$.

\subsection{Non-existence of fluctuations}
From now on, it looks that we can follow the previous argument. 
We are in better shape in some aspects, but we are missing something because we do not have the $(-\infty, \infty)$ intervals, but only semi-intervals $[a_+, \infty)$ and $(-\infty, a_-]$.

\begin{proposition}[crossing lemma]\label{upcrossing}
Take $0< \alpha< \delta$. If we have a segment of a Class-A geodesic with 
\[
y(t_1) = \alpha = y(t_2),\qquad y(t) \geq \alpha\quad \forall ~t\in[t_1, t_2].
\]
Then, $y(t) = \alpha$ for any $t\in [t_1, t_2]$.
\end{proposition}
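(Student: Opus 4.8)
The plan is to prove Proposition~\ref{upcrossing} by the same kind of \emph{rearrangement} used for Proposition~\ref{jumps}: I would flatten the excursion down onto the parallel $\{y=\alpha\}$ and show that, in the near-equator regime, this strictly shortens the curve unless the excursion was already flat, which would contradict the Class-A minimizing property. The only structural input I need is the monotonicity of the two ``horizontal'' coefficients in the length density
\[
|\dot\eta|_g=\sqrt{(1+y^2)^2\,\dot x^2+\frac{(1+y^2)^2}{1-y^2}\,\dot y^2+(1+y^2)^2(1-y^2)\,\dot\sigma^2}.
\]
The coefficient of $\dot x^2$ is $(1+y^2)^2$ and that of $\dot\sigma^2$ is $(1+y^2)^2(1-y^2)=f(y^2)^2$ with $f(u)=(1+u)\sqrt{1-u}$; by the side computation preceding Proposition~\ref{jumps}, $f$ increases for $u<\tfrac13$, so both coefficients are nondecreasing in $y^2$ on $|y|\le\tfrac{\sqrt3}{3}$. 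Since $\alpha<\delta$ and $\delta$ was chosen small, the lemma is applied in this regime, and I may assume $\alpha\le y(t)\le\tfrac{\sqrt3}{3}$ on $[t_1,t_2]$.

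Next I would define the competitor $\tilde\eta$ by $\tilde x=x$, $\tilde\sigma=\sigma$ and $\tilde y\equiv\alpha$, i.e. the radial projection of $\eta$ onto the parallel $\{y=\alpha\}$ with longitude and $x$ left unchanged. Because $y(t_1)=y(t_2)=\alpha$, the two curves share endpoints, so $\tilde\eta$ is an admissible curve between them in the universal cover. Comparing densities, setting $\dot{\tilde y}=0$ deletes the nonnegative $\dot y^2$ term, while replacing $y$ by $\alpha\le y$ can only lower the two horizontal coefficients; hence $|\dot{\tilde\eta}|_g\le|\dot\eta|_g$ pointwise, with
\[
|\dot\eta|_g^2-|\dot{\tilde\eta}|_g^2=\big[(1+y^2)^2-(1+\alpha^2)^2\big]\dot x^2+\frac{(1+y^2)^2}{1-y^2}\,\dot y^2+\big[f(y^2)^2-f(\alpha^2)^2\big]\dot\sigma^2,
\]
each term nonnegative and, at any $t$ with $y(t)>\alpha$, the two bracketed factors strictly positive.

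The crux is the rigidity step. Suppose for contradiction $y(t_0)>\alpha$ at some interior $t_0$; by continuity $y>\alpha$ on an open subinterval $J\subset[t_1,t_2]$. Since a geodesic has constant, positive speed, $\dot\eta$ never vanishes, so on $J$ at least one of $\dot x,\dot y,\dot\sigma$ is nonzero and, by the displayed identity, $|\dot\eta|_g>|\dot{\tilde\eta}|_g$ on $J$. Integrating gives $|\tilde\eta|<|\eta|$, contradicting the defining property that a segment of a Class-A geodesic minimizes length between its endpoints in the universal cover. Hence no such $t_0$ exists and $y\equiv\alpha$ on $[t_1,t_2]$.

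I expect the rigidity step, rather than the inequality, to be the real obstacle: a bare rearrangement yields only the weak bound $|\tilde\eta|\le|\eta|$, and upgrading it to $y\equiv\alpha$ relies on the \emph{strict} monotonicity of the coefficients for $y^2<\tfrac13$ together with the non-vanishing of the geodesic velocity. A second point requiring care is that the comparison is valid only in the monotone regime $|y|\le\tfrac{\sqrt3}{3}$: past the waist of the hourglass the $\dot\sigma$-coefficient begins to decrease, so flattening could lengthen the curve. I would therefore invoke the reduction already established (via Proposition~\ref{jumps} and the geodesic equation) confining the relevant near-equator intervals to $|y|<\delta<\tfrac{\sqrt3}{3}$.
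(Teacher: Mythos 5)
Your proof is correct and follows essentially the same route as the paper, whose entire argument is the one-line rearrangement ``lower $y$ to $\alpha$ and the curve gets shorter'': you simply make explicit the competitor $(\tilde x,\tilde y,\tilde\sigma)=(x,\alpha,\sigma)$, the pointwise comparison of the length densities via monotonicity of $(1+y^2)^2$ and $(1+y^2)^2(1-y^2)$ in $y^2$, and the strictness needed to conclude $y\equiv\alpha$. Your caveat about restricting to the monotone regime $|y|\le\tfrac{\sqrt3}{3}$ is the same implicit restriction under which the paper applies the lemma (the intervals where $|y|<\delta$), so it is a clarification rather than a deviation.
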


\begin{proof}
We see that if we lower the $y$ to $\alpha$, we get a shorter line. 
\end{proof}

We therefore have that $\lim\limits_{y\rightarrow +\infty} y(t)=0, \lim\limits_{y\rightarrow -\infty} y(t)=0$. Both limits exist because of monotonicity and if they were different from zero, we would contradict Proposition~\ref{notime}.

In the following, we will rule out the bumps using the conserved quantities of
the geodesic flow.
See Appendix~\ref{sec:geodesic}. The conserved angular momentum (that comes
from the rotational symmetry) shows that if an orbit is rotating around the poles with a non-trivial rate (non-zero angular momentum) it cannot be
Class-A because rotating along the poles, at a certain rate will make
the orbit significantly longer that moving directly along the $x$ direction.

Finally, we study the orbits corresponding to angular momentum zero.

This  part of the argument uses slightly the geodesic flow and the dynamics
language.  This argument is very similar to the variational arguments that
show that, if there are any deviations from the 
conjectured easy solutions, they have to be large. We are
aware of some purely variational arguments along these lines
(these are usually done using density estimates), but they are
very cumbersome (we would love to hear about simple ones) and we hope
that some readers experts in variational 
methods may be motivated to learn about geodesic flows. 
The Mather theory of
minimizing measures has some dynamical components, that surprisingly
match very well with some aspects of the theory of flat chains
see \cite{Bangert95, Bangert99} for some comparisons 
between Mather's theory and geometric measure theory.

\smallskip

We use $L$ for the angular momentum (see Appendix~\ref{sec:geodesic})
and then we have:
\begin{proposition}
There are no Class-A orbits corresponding to $L \neq 0$.
\end{proposition}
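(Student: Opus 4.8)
The plan is to use the conservation of angular momentum to show that any orbit with $L \neq 0$ must spend a definite amount of ``extra'' length winding around the $y$-axis, so that it can always be beaten by a competitor that does not rotate. Concretely, I would first recall from Appendix~\ref{sec:geodesic} the explicit form of the conserved angular momentum associated to the rotational symmetry $z \mapsto z e^{i\theta}$ of the metric \eqref{AFmetric}. In the orthogonal coordinates $(x, y, \sigma)$, the Lagrangian is the square root of $(1+y^2)^2\dot{x}^2 + \frac{(1+y^2)^2}{1-y^2}\dot{y}^2 + (1+y^2)^2(1-y^2)\dot\sigma^2$, and since $\sigma$ is cyclic, the momentum $L = \partial_{\dot\sigma} \mathcal{L}$ is constant along geodesics. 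Parametrizing by arclength so that the speed is $1$, this gives a relation of the form $L = (1+y^2)^2(1-y^2)\dot\sigma$, so that $\dot\sigma = L / \big( (1+y^2)^2 (1-y^2) \big)$, which is bounded away from zero whenever $|y|$ stays bounded away from $1$.

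Next I would exploit the structure established by the earlier propositions: by Proposition~\ref{notime} and the monotonicity/crossing results (Proposition~\ref{upcrossing}), any Class-A orbit must have $y(t) \to 0$ as $t \to \pm\infty$, and it spends all but finitely much time in the region $|y| < \delta$. On a long segment contained in $\{|y| < \delta\}$, the relation for $\dot\sigma$ shows $|\dot\sigma| \geq |L|/\big((1+\delta^2)^2\big) =: c_L > 0$, so the longitude $\sigma$ advances at a definite rate over an arbitrarily long $x$-interval. The idea is then a budget argument in the spirit of Proposition~\ref{totalbudget} and Proposition~\ref{jumps}: along such a segment the length picks up a genuine contribution from the $(1+y^2)^2(1-y^2)\dot\sigma^2$ term, namely roughly $\int (1-y^2)c_L^2 \, dt$ over the segment, which grows linearly in the length of the segment. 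I would compare the orbit restricted to a long block $x \in [0, n]$ against the rearranged competitor of Proposition~\ref{jumps}-type that sets $\dot\sigma \equiv 0$ (sliding along a fixed meridian, i.e.\ fixing $z/|z|$), whose length over the same block is at most $n + 2B$ by Proposition~\ref{totalbudget}. Since the original orbit's length exceeds $n$ by a term growing linearly in $n$ (coming from the nonzero $\dot\sigma$ that cannot be undone while $y\to 0$ at the ends), for $n$ large the competitor is strictly shorter, contradicting the Class-A property.

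I expect the main obstacle to be making the linear-in-$n$ lower bound on the excess length rigorous while the orbit is genuinely asymptotic to the equator. The subtlety is that as $y \to 0$ one has $\dot\sigma \to L/\big((1+y^2)^2(1-y^2)\big) \to L$, so the angular speed does \emph{not} decay; this is actually what makes the argument work, but one must check that the angular-velocity contribution $(1+y^2)^2(1-y^2)\dot\sigma^2 = L^2/\big((1+y^2)^2(1-y^2)\big)$ to the squared speed stays bounded below by a positive constant as $y \to 0$, and then convert this pointwise lower bound on the integrand into a lower bound on total length via the inequality $\sqrt{a^2 + b^2} \geq \sqrt{a^2} + \tfrac{b^2}{2\sqrt{a^2+b^2}}$ (or simply $\sqrt{1 + c^2/(\text{speed})} $-type estimates) relative to the pure-$x$ motion. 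I would therefore write the speed as $\big(\text{horizontal part}\big) + \big(\text{rotational surplus}\big)$ and carefully track that the surplus integrates to something at least proportional to $|L|^2 \cdot n$ over the block, which dominates the bounded deficit $2B$ allowed by the Class-A comparison. The remaining step, handling the case $L = 0$, is explicitly deferred to the next proposition, so here it suffices to rule out $L \neq 0$.
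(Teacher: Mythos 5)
Your proposal is correct and follows essentially the same route as the paper: you use the conserved angular momentum to bound $|\dot\sigma|$ from below on the region $|y|<\delta$ where the orbit must eventually stay, deduce that the rotational term forces an excess length growing linearly along the orbit, and beat it by the non-rotating equatorial competitor of Proposition~\ref{totalbudget}, whose overhead is bounded. The only differences are cosmetic (arclength normalization so $E$ does not appear, and phrasing the excess via the integrand rather than counting windings), so no further changes are needed.
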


\begin{proof}
We first note that $E\neq 0$ since all the points in the manifold are fixed points when the energy is zero.

We know  that all the orbits in the future (or in the past) stay in the region $|y| <\delta$. 
Note that this implies that $|\dot{\sigma}|$ is bounded from below in that region, that is 
\begin{equation}\label{lowerbound}
|\dot{\sigma}| \geq \frac{|L E|}{(1+\delta^2)^2} =: \sigma^*>0.
\end{equation}
For simplicity, we will only discuss the orbits in the future, the orbits in the past are obtained by changing $t$ into $-t$.
The lower bound of angular velocity \eqref{lowerbound} implies that after time $T\geq \frac{N 2 \pi}{\sigma^*}$, the orbit has gone around the equator $N$ times.
If $N$ is large enough, we can compare
the presumed Class-A orbit $\eta(t)$ with a segment obtained by joining the initial point to the equator (length $\leq \delta$) then joining $\dot{x} =1, \dot{y} =0, \dot{z}=0$ and then joining again.

This orbit is shorter when $\frac{N}{\sigma^*} \geq 2\delta$. Therefore, the orbit $\eta(t)$ is not Class-A.
\end{proof}

The final part of the argument is: 
\begin{proposition}
The only Class-A geodesics with $L=0$ are the straight orbits given in Theorem~\ref{thm:classA}.
\end{proposition}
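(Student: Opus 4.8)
The plan is to exploit the vanishing of the angular momentum to collapse the problem to a one-dimensional rearrangement. Recall from Appendix~\ref{sec:geodesic} that $\sigma$ is a cyclic coordinate, so the angular momentum $L$ is conserved; in the coordinates used above it is a strictly positive multiple of $(1+y^2)^2(1-y^2)\,\dot\sigma$. Since $|y|<1$ all along the orbit, the factor $(1+y^2)^2(1-y^2)$ is strictly positive, so the hypothesis $L=0$ forces $\dot\sigma\equiv 0$. Thus the longitude is constant and the whole orbit is confined to a single meridian, with length element reduced to
\[
(1+y^2)\sqrt{\dot x^2 + \frac{\dot y^2}{1-y^2}}\ .
\]
This is the crucial simplification: once $\dot\sigma=0$ there is no longer any length to be saved near the poles, and the only weight that matters is the factor $(1+y^2)$ multiplying $|\dot x|$, which is strictly increasing in $|y|$ for \emph{all} $y$, not merely near the equator.

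First I would record the two facts already in hand: by Proposition~\ref{notime} together with the crossing lemma (Proposition~\ref{upcrossing}) one has $y(t)\to 0$ as $t\to\pm\infty$. I then argue by contradiction. Suppose $y\not\equiv 0$, so that, after possibly reflecting $y\mapsto -y$, we have $\sup_t y(t)=M>0$. Because $y$ tends to $0$ at both ends, for any level $\alpha\in(0,M)$ the open set $\{t:\,y(t)>\alpha\}$ is nonempty and bounded; I pick one of its maximal intervals $(a,b)$, so that $y(a)=y(b)=\alpha$ while $y(t)>\alpha$ somewhere inside $(a,b)$.

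The heart of the argument is a single rearrangement localized to $[a,b]$. Replace the arc there by the competitor with the same $x(t)$, the same (constant) $\sigma$, and $\tilde y(t)\equiv\alpha$. The endpoints are preserved, and pointwise on $[a,b]$,
\[
(1+y^2)\sqrt{\dot x^2 + \frac{\dot y^2}{1-y^2}}\ \geq\ (1+y^2)\,|\dot x|\ \geq\ (1+\alpha^2)\,|\dot x| .
\]
Since $y(a)=\alpha$ while $y>\alpha$ somewhere in $(a,b)$, the $C^1$ function $y$ must have $\dot y\neq 0$ on a set of positive measure, where the first inequality is strict; hence the competitor is \emph{strictly} shorter, contradicting the Class-A property applied to the finite segment over $[a,b]$. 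Letting $\alpha\downarrow 0$ rules out all positive bumps, and the reflected argument rules out the negative ones. Note that, precisely because the weight monotonicity of $(1+y^2)$ is global once $\dot\sigma=0$, this same rearrangement simultaneously disposes of the finitely many large excursions reaching $|y|\geq \sqrt{3}/3$. Therefore $y\equiv 0$.

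Finally, with $y\equiv 0$ and $\sigma$ constant we get $z=\sqrt{1-y^2}\,e^{i\sigma}=e^{i\sigma}=:z_0$ constant, and on the equator the metric is simply $ds=dx$, so the constant-speed geodesic equation forces $\dot x$ constant; after an affine reparametrization this is exactly $x(t)=t,\ y(t)=0,\ z(t)=z_0$, the family of Theorem~\ref{thm:classA}. The step I expect to be the main obstacle is making the rearrangement inequality strict in full rigor and localizing it to a genuine finite segment, so that the Class-A hypothesis (every finite segment minimizes between its endpoints) applies cleanly; in particular one must verify that the maximal interval $(a,b)$ has finite endpoints and that the competing curve is admissible in the universal cover.
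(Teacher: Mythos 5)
Your overall route is the same as the paper's: $L=0$ forces $\dot\sigma\equiv 0$, the motion is confined to a meridian ``plane'', and then one argues that any excursion from the equator is harmful; your level-$\alpha$ cut-and-flatten rearrangement is a welcome rigorous version of the paper's one-line assertion that ``any excursion from the equator will deviate from the straight line'', and the final identification with the orbits of Theorem~\ref{thm:classA} is fine.

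There is, however, one genuine gap, and it sits exactly where you yourself flagged doubt (admissibility of the competitor): the claim that $|y|<1$ all along the orbit is never justified, and nothing proved earlier (Propositions~\ref{notime}, \ref{jumps}, \ref{upcrossing}) excludes a Class-A candidate from reaching a pole. An $L=0$ geodesic can pass through $y=\pm 1$: it moves in the totally geodesic two-dimensional submanifold $\{\sigma\in\{\sigma_0,\sigma_0+\pi\}\}$, and there exist such geodesics asymptotic to the equator as $t\to\pm\infty$ that cross a pole exactly once, arriving on the antipodal meridian; these satisfy every constraint obtained so far ($y\to 0$ at both ends, finitely many excursions, $L=0$). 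For the maximal interval $(a,b)$ containing the pole crossing, your competitor (same $x(t)$, same constant $\sigma$, $\tilde y\equiv\alpha$) does \emph{not} join the endpoints, since $z(a)$ and $z(b)$ lie on opposite meridians, so the rearrangement yields no contradiction there. Excluding these orbits requires a comparison that leaves the meridian plane: e.g.\ drop to the equator, change $\sigma$ by $\pi$ along the short equatorial parallel while advancing in $x$, and climb back, using that the meridian distance between the two equator lines, $\int_{-1}^{1}(1+y^2)(1-y^2)^{-1/2}\,dy=3\pi/2$, strictly exceeds the equatorial half-circumference $\pi$; one also needs a small quantitative check that the $x$-extent of the pole excursion (which grows only logarithmically as the cut level $\alpha$ decreases, by the exponential approach to the equator) does not wash out this gain. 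To be fair, the paper's own two-sentence proof is silent on this case as well, but as written your argument would not detect a pole-crossing candidate, so this case must be treated separately before the proposition is fully proved.
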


The reason why this is true is intuitively clear. The only reason why it pays  off to depart from the equator is to gain by effecting the changes in $z$-direction around the poles.

If we have no angular momentum, there is no change in $z$-coordinate and therefore, leaving the equator is harmful. More formally, we note that if $\dot{\sigma}=0$, we are faced with a 2-dimensional system and the motion happens in a plane. Any excursion from the equator will deviate from the straight line.
\qed

\section{Some possible extensions of the argument and some applications} 
\label{sec:speculation}

\subsection{Building more complicated examples} 
Given a manifold $\M$, with universal cover $\widetilde{\M}$ and fundamental group $\pi_1(\M)$ 
(so that $\M= \widetilde{\M} / \pi_1(\M)$). 
Let $\ell: \pi_1(\M) \rightarrow \mathbb{R}$ be a cocycle of the fundamental group (i.e., $\ell(\gamma_1 \circ \gamma_2) = \ell(\gamma_1) + \ell(\gamma_2)$.) We can construct a manifold
\[
\mathbb{S}^2\times \tilde{\M}/\sim_{\ell}
\]
where $\mathbb{S}^2$ is parameterized as in \cite{Federer74}. We say that $(x, y, z) \sim (\tilde{x}, \tilde{y}, \tilde{z})$ if and only if we have that 
\begin{equation}
\begin{split}
\tilde{x} &= \gamma x\\
\tilde{y} &= y\\
\tilde{z} &= e^{i \ell(\gamma)} z
\end{split}
\end{equation}for some Deck transformation $\gamma$. Given a Riemannian metric $g$ on $\M$, we can define a metric on $\tilde{\M}$ by $(1+y^2) g$.

It is easy to check that if  $\ell(\gamma) \in \mathbb{R}\setminus \mathbb{Q}$ for some $\gamma\in \pi_1$, we have the same phenomenon as in the example of \cite{Federer74}: All the Tonelli geodesics in the classes $\gamma^n$ are not Class-A.

In particular, if 
\[
\ell(\gamma) \notin \mathbb{Q} \quad \forall~\gamma\in \pi_1(\M)
\]
then there are no Tonelli geodesics which are Class-A. Hence there are no periodic Class-A orbits.

An amusing example of this construction is: when we take $\M=\mathbb{T}^3$ endowed with the metric of Hedlund example.

Note that there is a relation between cocycle of the fundamental group and the cohomology of the manifold. See \cite{Brown82}. Hence, the language
in terms of cocycles is equivalent to the language in terms of cohomology.

\subsection{Tentative applications in Statistical Mechanics}
There is  an interesting physical interpretation of the Almgren-Federer
 example, which also makes it relevant 
for some problems in statistical mechanics.

We recall that the famous classical XYZ model of Heisenberg \cite{McCoy10} consists of a system of particles in the line. 
Each of this particle occupies a state 
described by  three coordinates $u_1, u_2, u_3$ constrained to 
$
u_1^2 + u_2^2 + u_3^2 =1.
$
That is  $u=(u_1, u_2, u_3)\in \mathbb{S}^2$.

The state of the whole system 
(called configuration) 
 is determined by describing the state  $u_i$ 
of the $i$ particle for all particles.
 So, a configuration is just
 a mapping  $\underline{u}: \mathbb{Z}\rightarrow \mathbb{S}^2$.

These particles interact with their neighbors and the substratum so that the system is described by an energy given by the functional:
\begin{equation}\label{energy}
E(\underline{u}) := \sum_{j\in \mathbb{Z}} S(u_j, u_{j+1})
\end{equation}
where $S$ is the interaction energy among 
next nearest neighbors. 
The sum in \eqref{energy}  is merely formal and is not meant to converge.

What physicists call ``ground state'' is identical to the Class-A minimizers of the calculus of variations.
They are configurations whose energy cannot be lowered by changing a finite number of sites. Note that the definition of Class-A/ground state   does not need that the 
sum in \eqref{energy} converges.

The example in \cite{Federer74} can be recast in the language of the XYZ model. We have that $\widetilde{ \mathbb{S}^2 \times \mathbb{S}^1 }$, the universal cover of $\mathbb{S}^2 \times \mathbb{S}^1$, is $\mathbb{S}^2 \times \mathbb{R}$.

A configuration $u_j$ can be thought of as sequence of points in $\mathbb{S}^2 \times \mathbb{R}$, given by $(u_j, j)$. If we define
\begin{equation}
  \label{interactionXYZ}
S(u_j, u_{j+1}) := \text{dist}\left((u_j, j), (\mathcal{R} u_{j+1}, j+1) \right)
\end{equation}
where $\mathcal{R}$ is the rotation by $\alpha$ in the identification.
We see that the minimizers of the functional $E$ will be geodesics.

The results of \cite{Federer74} can be expressed in statistical
mechanics jargon as saying that imposing periodic boundary condition,
we never obtain a ground state. Sliding the boundary condition
further, will always lower the energy and the system with periodic
boundary condition never becomes a ground state.  
A configuration corresponding to a Tonelli orbit will become 
destabilized under fluctuations of longer and longer periods. 
It would be interesting to understand better the dynamics of
these relaxations. 

The
form of the interaction \eqref{interactionXYZ} is natural in
quasi-periodic media so that advancing one index $i$ by $1$ is
equivalent to rotating an internal phase.

The statistical mechanics interpretation, makes it natural 
to consider several extra features such as many-body interactions,
long range interactions whose consequences are interesting to explore.

\appendix

\section{The dynamical point of view:geodesic flows}
\label{sec:geodesic}

The problem of existence of geodesics with certain properties can
be recast in a more dynamical language leading to \emph{geodesic flows} 
\cite{Caratheodory89,Paternain99,AbrahamM78,ArnoldA68}. In this paper, we rely much more
on the methods of the calculus of variations, but we want to remark
that the Almgren-Federer example also has interesting properties
from the dynamical point of view.

\subsection{Basic definitions} 

Given a function $L: T\M  \rightarrow \mathbb{R}$ one can consider
the functional
\[
  S_{t_1}^{t_2}[\gamma]  = \int_{t_1}^{t_2} L( \gamma(t), \dot \gamma(t)) \, dt \, .
\]
Under appropriate regularity conditions -- we refer to the references above --
the functional $S$ is differentiable among paths that satisfy
$\gamma(t_1) = a; \gamma(t_2) = b$. A path $\gamma$ is a critical
point of the functional if and only if it satisfies
the Euler-Lagrange equation.
\begin{equation}
  \label{EulerLagrange}
  D_1 L - \frac{d}{dt} D_2 L = 0.
\end{equation}

The  first order differential equation  \eqref{EulerLagrange}
in $T\M$ can be interpreted as a second order
equation in $\M$, which is considered as an evolution equation. Again,
under appropriate conditions on the regularity, growth, of $L$ and on the
manifold $\M$, the flow is complete (given any initial condition in $T\M$,
there is a solution of \eqref{EulerLagrange} defined for all times.)

Much of the theory works also for time-dependent Lagrangians, but we will
not consider this.

\subsubsection{The geodesic flows} 
When we take
$L_1 (\gamma, \dot \gamma)  = g_{\gamma}(\dot \gamma, \dot \gamma)^{1/2}$
the functional $S$ is just the length.

However, much of the theory of geodesic flows is obtained
taking the Lagrangian
$L_2(\gamma, \dot \gamma)  = g_{\gamma}(\dot \gamma, \dot \gamma)$.

The   Lagrangian $L_2$, quadratic in the velocity
is more natural in Mechanics\footnote{In Mechanics, it is customary 
to multiply it by a factor $\frac{1}{2}$.} and,
as we will see later, the superlinear growth of the Lagrangian
in the velocity is important for the Mather theory of
homology minimizing measures (see Appendix~\ref{sec:mather} and the references
there).

As it turns out, the solutions of the Euler-Lagrange equations
for both systems are the same. 

The reason is that,
using that $L_2$ is homogeneous in the velocity, one can show
that $L_2$ is a conserved quantity for the Euler-Lagrange flow
corresponding to $L_2$.  For physicists, the $L_2$ is the kinetic
energy. The fact that the energy is the same as the Lagrangian
depends on the fact that the energy is homogeneous in the velocity. 

If we consider a Lagrangian $\tilde L = F(L_2)$ with $F$ a smooth function
(we omit some details on regularity, etc.)
we obtain that the Euler-Lagrange equations corresponding to it
are
\begin{equation} \label{modified} 
  F'(L_2 ) D_1 L_2 - \frac{d}{dt}(  F'(L_2) D_2 L_2)  = 0.
\end{equation}
We can see that if $\gamma(t)$ is a solution of the Euler-Lagrange equation
for $L_2$, then, because $L_2$ is conserved for these solutions, then
$\gamma(t)$ solves \eqref{modified}. Since these solutions can match
all the initial conditions, they are all the solutions.

\subsection{Conserved quantities of the geodesic flow 
  of the metric \eqref{AFmetric}}

The metric \eqref{AFmetric} has 3  local symmetries
(invariance with respect to time, horizontal translations
and rotations along the longitude. According to Noether's 
theorem, this means that the system has 3  conserved quantities.

It is interesting to note that, due to the gluing in \eqref{equivalence} 
even if we have infinitesimal symmetries, the symmetry of translation 
along the $x$ axis does not have compact leaves. This is closely related 
to the fact that there are no strict minimizers.

As remarked above, we could apply Noether's theorem
either to the Lagrangian $L_1$ or to the $L_2$, the conserved
quantities obtained are different in both cases, but they are
function of each other.

\begin{proposition}
  The geodesic flow corresponding to
  the Lagrangian $L_1$ corresponding to 
the metric \eqref{AFmetric} preserves the following conserved quantities:
\begin{description}
\item [energy]
\[
\begin{split}
E&=(1+y^2) \sqrt{\dot{x}^2 + \dot{y}^2 + |\dot{z}|^2}  \\
  &=(1+y^2) \sqrt{\dot{x}^2 + \dot{y}^2 + (1-y^2) \ \dot{\sigma}^2 }
\end{split} \]

\item [momentum]
\[
\begin{split}
P&= (1+y^2) \frac{\dot{x}}{\sqrt{\dot{x}^2 + \dot{y}^2 + (1-y^2)\  \dot{\sigma}^2}} \\
&= \frac{(1+y^2)^2 \  \dot{x}}{E}
\end{split}
\]

\item [angular momentum]         
\[
\begin{split}
L = \frac{(1+y^2)(\dot{z}_1 z_2 - \dot{z}_2 z_1)}{2 \sqrt{\dot{x}^2 + \dot{y}^2 + |\dot{z}|^2} } \\
&= \frac{(1+y^2)^2 (1-y^2)\  \dot{\sigma}}{E}
\end{split}
\]
\end{description}
\end{proposition}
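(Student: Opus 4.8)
The plan is to obtain all three quantities as first integrals of the Euler--Lagrange flow, exactly as predicted by Noether's theorem from the three continuous symmetries listed above. I would work in the orthogonal coordinates $(x,y,\sigma)$ with $z=\sqrt{1-y^2}\,e^{i\sigma}$, for which the length integrand computed just before the statement reads
\[
L_1=(1+y^2)\sqrt{\dot x^2+\tfrac{1}{1-y^2}\dot y^2+(1-y^2)\dot\sigma^2}\,,
\]
equivalently $L_1=(1+y^2)\sqrt{\dot x^2+\dot y^2+|\dot z|^2}=E$. The whole point is that $L_1$ depends on neither $x$, nor $\sigma$, nor the parameter $t$ explicitly, and the three conserved quantities correspond precisely to these three ignorable variables.

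For the momentum and the angular momentum the argument is immediate. Since $x$ and $\sigma$ are cyclic, the Euler--Lagrange equation \eqref{EulerLagrange} gives $\frac{d}{dt}\,\partial L_1/\partial\dot x=\partial L_1/\partial x=0$ and $\frac{d}{dt}\,\partial L_1/\partial\dot\sigma=\partial L_1/\partial\sigma=0$. Hence $P:=\partial L_1/\partial\dot x$ and $L:=\partial L_1/\partial\dot\sigma$ are constant along geodesics, and a one-line differentiation of the displayed $L_1$ yields $P=(1+y^2)^2\dot x/E$ and $L=(1+y^2)^2(1-y^2)\dot\sigma/E$. To recover the first (complex) form of $L$ I would use $z=\sqrt{1-y^2}e^{i\sigma}$ to compute $\dot z_1 z_2-\dot z_2 z_1=-(1-y^2)\dot\sigma$ together with $|\dot z|^2=\frac{y^2}{1-y^2}\dot y^2+(1-y^2)\dot\sigma^2$, so that the two expressions agree up to the overall sign and normalization conventions.

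The energy requires one extra remark, and this is the only genuinely delicate point. Because $L_1$ is positively homogeneous of degree one in the velocity, its naive Noether energy $\dot\gamma\cdot\partial_{\dot\gamma}L_1-L_1$ vanishes identically, reflecting the reparametrization invariance of length; so $E$ is \emph{not} this Hamiltonian. Instead I would invoke the fact, recorded above in this appendix, that the $L_1$- and $L_2$-flows have the same solutions, and pass to $L_2=L_1^2=g_\gamma(\dot\gamma,\dot\gamma)$. For $L_2$, homogeneous of degree two, Euler's relation gives the Hamiltonian $\dot\gamma\cdot\partial_{\dot\gamma}L_2-L_2=2L_2-L_2=L_2$, which is conserved by time-translation invariance; equivalently, geodesics have constant speed. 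Therefore $E=\sqrt{L_2}=(1+y^2)\sqrt{\dot x^2+\dot y^2+|\dot z|^2}$ is conserved, which is the claimed first form, and dividing the momentum and angular-momentum numerators by this constant $E$ recovers the compact second forms.

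The computations in the momentum and angular-momentum steps are entirely routine once $L_1$ is written out, and the coordinate bookkeeping between the $z$-form and the $\sigma$-form is a short exercise. The main obstacle is conceptual rather than computational: one must recognize that the ``energy'' listed for the reparametrization-invariant length Lagrangian should be read as the conserved speed $\sqrt{L_2}$, obtained by routing through $L_2$, rather than as the identically-vanishing Noether energy of $L_1$ itself.
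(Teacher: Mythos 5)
Your proposal is correct, and for the momentum and angular momentum it is exactly the paper's argument: $x$ and $\sigma$ are cyclic for the Lagrangian written in the coordinates $(x,y,\sigma)$, so Noether's theorem (equivalently, the Euler--Lagrange equations for the ignorable variables) gives the conservation of $\partial L_1/\partial\dot x$ and $\partial L_1/\partial\dot\sigma$, and the one-line differentiations yield the stated formulas; your conversion between the complex form $\dot z_1 z_2-\dot z_2 z_1=-(1-y^2)\dot\sigma$ and the $\sigma$-form is also the intended bookkeeping (the paper's two displayed expressions for $L$ indeed differ by sign and a factor of $2$, as you note, and your $\frac{(1+y^2)^2}{1-y^2}\dot y^2$ term is the form consistent with the length element computed in Section~\ref{sec:classA}). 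The only place where you genuinely diverge is the energy: the paper asserts conservation of $E$ by appealing to the fact that the geodesic flow preserves speed and writes the Noether quantity as $\frac{\partial E}{\partial\dot q}\dot q=E$, which is really just Euler's relation for the degree-one homogeneous $L_1$; you instead make explicit that the time-translation Noether charge of $L_1$ vanishes identically (reparametrization invariance) and recover $E=\sqrt{L_2}$ by routing through the quadratic Lagrangian $L_2$, whose Hamiltonian $\dot\gamma\cdot\partial_{\dot\gamma}L_2-L_2=L_2$ is conserved. This is a cleaner and more honest treatment of the one delicate point, and it leans on the identification of the $L_1$- and $L_2$-flows already recorded earlier in Appendix~\ref{sec:geodesic}, so nothing extra needs to be proved; the paper's shortcut reaches the same conclusion but glosses over the degeneracy you flag.
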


\begin{proof}
(i). The energy is conserved because of the well-known fact that the length is preserved by the geodesic flow denoted by $\phi^t$.
In fact, note that applying Noether's theorem (see, for example \cite{Arnold89}) for the one-parameter group of diffeomorphisms $\phi^t$ (the geodesic flow)  produces a time conserved quantity:
\[
\frac{\partial E}{\partial \dot{q}} \dot{q} = E
\]
where $q=(x,y,\sigma)$.

(ii). The momentum corresponds to the translation:
\begin{equation}
\begin{split}
x &\rightarrow   x +\epsilon\\
y &\rightarrow y\\
z &\rightarrow z.
\end{split}
\end{equation}
Applying the general rule of Noether's theorem, we obtain the first integral $\frac{\partial E}{\partial \dot{x}} =P$.

(iii). The angular momentum corresponds to the transformation:
\begin{equation}
\begin{split}
x &\rightarrow   x \\
y &\rightarrow y\\
z &\rightarrow z e^{i \alpha}  \Leftrightarrow \sigma\rightarrow \sigma + \alpha.
\end{split}
\end{equation}
Applying Noether's theorem, we obtain the first integral $\frac{\partial E}{\partial \dot{\sigma}}= L.$
\end{proof}

Note that these three first integrals will allow us to integrate the equations in the universal cover. For example, we can obtain $\dot{y}$ as a function of $y$ and the conserved quantities:
\[
\begin{split}
E^2 &= (1+y^2)^2 [\dot{x}^2 + \dot{y}^2 + (1-y^2) \dot{\sigma}^2]\\
&= (1+y^2)^2 \left[  \frac{P^2 E^2}{(1+y^2)^4}  + \dot{y}^2 + \frac{L^2 E^2}{(1+y^2)^4 (1-y^2)}\right].
\end{split}
\]

Even if one can get the explicit solutions, it is not easy to understand
the minimizing properties of the orbits. Of course, we will present arguments 
from the calculus of variations later.

\begin{remark}
It is interesting to remark that the symmetry under translation is a differentiable symmetry which leads to conserved quantity by Noether's theorem, but it does not allow to make the 
quantities descend to 
the manifold. (The orbits of the symmetry are dense.) Hence, we cannot find quotient manifolds corresponding to the system.

This situation happens very often in quasi-periodic systems (see for example \cite{PetrovL99, SuL12}).
\end{remark}

\subsection{On the notion of integrability and the integrability of
  the geodesic flow  for the Almgren-Federer example}
\label{sec:integrability}

The word \emph{integrability} is used often very loosely
in Hamiltonian mechanics and, a system can be integrable or
not depending on the precise meaning given to integrability.

One of the loosest definition is that the system is integrable if
it has as many conserved \emph{``functionally independent''} conserved
quantities as degrees of freedom. In this sense, the geodesic
flow of the Almgren-Federer example is integrable.

On the other hand a more strict definition of integrability is 
that the conserved quantities commute with each other and
be independent everywhere.  It is known by the Liouville-Arnold
theorem \cite[p. 278]{HoferZ11} that this implies  that the phase space should
be of the form $\mathbb{T}^d \times \mathbb{R}^d$.  Clearly,
the phase space of the Almgren-Federer example (in the Hamiltonian 
formalism) is 
 $T^*( \mathbb{S}^1 \times \mathbb{S}^2)$
is not of this form, so it cannot be integrable in this stronger sense.

It seems that the Hamiltonian flow of 
the Almgren-Federer  example 
 satisfies this definition of integrability, in some open sets
but that there are singular leaves.  See \cite{BolsinovF04} for a discussion
the topological issues involved in integrable systems with singular 
leaves.

The variables produced in the Liouville-Arnold theorem are called
action-angle variables. They can be obtained by integrals. We note however
that, even for polynomial systems, the action variables have complex
singularities (even for an anharmonic oscillator or a pendulum), so that in many
computations it is advantageous to have methods that do not rely on the action
angle variables.

In some notions of integrability, it is also required that the
integrals or the action-angle are  algebraic functions of
the coordinates or obtained through some specific method, etc.

It is unfortunate that in many discussions of integrable systems,
precise definitions are omitted.

\section{H. M. Morse's theory of globally minimizing geodesics}
\label{sec:morse} 

In this section, we present a summary of the main concepts and results 
of \cite{Morse24}. The  paper \cite{Morse24}  developed the global
theory of minimizing geodesics in dimension 2 except for the torus and the sphere. 
The global 
theory on the torus was developed in \cite{Hedlund32} and in 
the sphere it is meaningless. 

 The effect of the example
\cite[Section 5.11]{Federer74} (and of Hedlund's example
\cite{Hedlund32}) is to show that this theory of \cite{Morse24} on
geodesics does not generalize to higher dimensions.

Let $\gamma: [a,b]\rightarrow \M$ be an absolutely continuous curve in a Riemannian manifold $(\M, g)$; then the length is defined as follows:
\[
|\gamma|_g : = \int_a^b | \dot{\gamma}(t) |_g dt = \int_a^b \sqrt{g(\dot{\gamma}(t), \dot{\gamma}(t))} dt.
\]

\begin{definition}[Class-A geodesics]
\label{def:classA}

A geodesic $\gamma$ parameterized by the arclength $s\in\mathbb{R}$ is
said to be Class-A with respect to the Riemannian metric $g$ if for
any point $s_1< s_2$, the absolutely continuous geodesic segment
$\tilde{\gamma}$ with end points $\gamma(s_1)$ and $\gamma(s_2)$
homotopic to $\gamma([s_1, s_2])$ on $\M$, we have
\[
\left|\gamma|_{[s_1, s_2]}\right|_g \leq \left|\tilde{\gamma}\right|_g.
\]

\end{definition}

One of the main results of \cite{Morse24}
is the 
the following
\begin{theorem} Let $\M$ be a 2-dimensional Riemannian manifold of genus 
$g \ge 2$. Then, by the  uniformization theorem, the universal cover $\tilde\M$ 
is a disk and the metric in $\tilde\M$ is conformal to the standard Poincar\'e 
metric. 

There exists a constant $C>0$ such that for every geodesic $\alpha$  of the standard Poincar\'e metric in the Poincar\'e disk, there is Class-A geodesic $\gamma_\alpha$ in $\widetilde\M$ such that $\dist(\alpha, \gamma_\alpha) \le C$.

If $\M = \mathbb{T}^2$, for any straight line $\ell$ in the universal cover
(which we identify with $\mathbb{R}^2$)
there exists a Class-A geodesic $\gamma_\ell$ such that 
$\dist(\ell, \gamma_\ell) \le C$. 
\end{theorem}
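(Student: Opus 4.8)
The plan is to construct $\gamma_\alpha$ (respectively $\gamma_\ell$) as a limit of minimizing geodesic segments whose endpoints march off to infinity along the model geodesic $\alpha$ (respectively the line $\ell$), following exactly the limiting scheme already carried out above for $\mathbb{S}^1\times\mathbb{S}^d$. Write $\tilde g$ for the lift of the metric to $\widetilde{\M}$ and $g_0$ for the model metric (the Poincar\'e metric when $g\ge 2$, the Euclidean metric when $\M=\mathbb{T}^2$). Since $\M$ is compact, the deck group $\Gamma=\pi_1(\M)$ acts cocompactly by $\tilde g$-isometries, so $(\widetilde{\M},\tilde g)$ is complete and, by Hopf--Rinow, any two points are joined by a $\tilde g$-minimizer. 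Choosing $p_R,q_R$ on $\alpha$ at $g_0$-arclength $\mp R$, let $\gamma_R$ be such a minimizer. The argument then splits into three pieces: a uniform comparison between $\tilde g$ and $g_0$, a uniform bound $\dist(\gamma_R,\alpha)\le C$ independent of $R$, and the passage to the limit.

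For the comparison, in the case $g\ge 2$ we have $\tilde g=e^{2u}g_0$ with $u$ descending to the compact quotient $\M$; hence $u$ is bounded and $\tilde g$ is bi-Lipschitz to $g_0$. In the torus case $\tilde g$ is $\mathbb{Z}^2$-periodic and $\M$ is compact, so again $\tilde g$ and the Euclidean $g_0$ are bi-Lipschitz. In either case a $\tilde g$-minimizer is, up to the bi-Lipschitz constants, a $(\lambda,c)$-quasigeodesic for $g_0$.

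The heart of the matter is the uniform strip bound $\dist(\gamma_R,\alpha)\le C$, and here the two cases diverge sharply. When $g\ge 2$ the model space $(\widetilde{\M},g_0)$ is the hyperbolic plane, which is Gromov-hyperbolic, and I would invoke the stability of quasigeodesics in $\delta$-hyperbolic spaces (the ``Morse Lemma''): every $(\lambda,c)$-quasigeodesic lies within a distance $C=C(\lambda,c,\delta)$ of the genuine geodesic with the same endpoints at infinity. Since $\gamma_R$ is such a quasigeodesic and $\alpha$ is the $g_0$-geodesic through the ideal endpoints $\xi_\pm=\lim_R p_R,\lim_R q_R$, this yields the bound uniformly in $R$. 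For the torus the Euclidean plane is \emph{not} hyperbolic and the Morse Lemma fails, which is precisely why a different argument---Hedlund's---is needed. There one uses that $\tilde g$-minimizers do not self-intersect and that two minimizers cross at most once; applying this to $\gamma_R$ together with its $\mathbb{Z}^2$-translates confines $\gamma_R$ to a strip about $\ell$ whose width is bounded by a constant depending only on the metric (essentially the diameter of a fundamental domain). See \cite{Hedlund32, Bangert88}.

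Finally, the strip bound forces each $\gamma_R$ to pass within $C$ of $\alpha(0)$, so reparametrizing by $\tilde g$-arclength with $\gamma_R(0)$ a point of $\gamma_R$ nearest $\alpha(0)$ places every $\gamma_R$ in a fixed compact set at $s=0$. The $\gamma_R$ solve the geodesic ODE, hence are uniformly $C^1$ with equicontinuous velocities, and Arzel\`a--Ascoli gives a subsequence $\gamma_{R_i}\to\gamma_\alpha$ in $C^1$ on compact sets; the limit is a $\tilde g$-geodesic inheriting $\dist(\gamma_\alpha,\alpha)\le C$. That $\gamma_\alpha$ is Class-A then follows verbatim from the Morse-type limiting argument given above for $\mathbb{S}^1\times\mathbb{S}^d$: any finite subsegment minimizes between its endpoints in $\widetilde{\M}$ because it is a $C^1$-limit of globally minimizing segments, the $\delta/4$ comparison excluding any competitor. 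The main obstacle is exactly the torus strip bound: lacking hyperbolicity one cannot confine $\gamma_R$ by soft coarse geometry, and must instead exploit the genuinely variational non-crossing properties of minimizers, which is the technically substantial and conceptually different input of \cite{Hedlund32}.
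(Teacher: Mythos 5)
Your proposal is essentially correct, but it reaches the theorem by a genuinely different route than the one the paper points to. The paper does not reprove the statement: it refers to \cite{Morse24} (and, for the torus, \cite{Hedlund32}) and identifies the engine of that proof as the elementary crossing lemma --- two minimizing geodesics intersect at most once --- from which Morse confines minimizers near hyperbolic geodesics by a purely two-dimensional, variational argument. You instead observe that on genus $g\ge 2$ the conformal factor relating $\tilde g$ to the Poincar\'e metric descends to the compact quotient, so $\tilde g$-minimizers are uniform quasigeodesics of $\mathbb{H}^2$, and you import the stability of quasigeodesics in Gromov-hyperbolic spaces (the coarse ``Morse Lemma'', itself an abstraction of \cite{Morse24}) to get the uniform strip bound, finishing with the same Arzel\`a--Ascoli/limit-of-minimizers scheme the paper uses for $\mathbb{S}^1\times\mathbb{S}^d$; two small points to tidy are that the stability lemma should be applied to the segment of $\alpha$ between $p_R$ and $q_R$ rather than to ideal endpoints, and that the Hausdorff bound in both directions (not just $\gamma_\alpha$ near $\alpha$) must be carried through the limit, which quasigeodesic stability does give at each finite stage. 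What each approach buys: your coarse-geometric argument is short, robust under bi-Lipschitz perturbation of the metric, and generalizes beyond dimension two wherever the model is hyperbolic, but it uses the stability theorem as a black box and, as you correctly note, collapses in the flat case; the crossing-lemma argument is elementary and intrinsically two-dimensional, and it is precisely the mechanism that survives on $\mathbb{T}^2$ (via \cite{Hedlund32}, \cite{Bangert88}), which is why your torus case remains a sketch-plus-citation --- the same status it has in the paper --- rather than a self-contained proof.
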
 

The proof of the above results in \cite{Morse24} is very readable. 
It is based on the fundamental lemma that two minimizing geodesics 
can only cross once. (This lemma generalizes to many other contexts
and is an important inspiration for the analysis in 
\cite{AubryD} of the ground states in Frenkel-Kontorova model.
See \cite{Bangert88} for a very general point of view that 
uncovers the deep relation among these seemingly disparate areas. 

\begin{remark}
The examples of \cite{Hedlund32, Federer74} show that the results of
the theory of geodesics do not generalize to geodesics in higher dimensions.

On the other hand, as pointed in \cite{Moser86,Moser03} there are
rather satisfactory generalizations to \emph{Objects of codimension 1}.
Of course, the analysis becomes much more complicated since one has
to deal with objects of higher dimension which require
tools more sophisticated than ODE's. This has lead to many results
by different schools.

Many results in elliptic PDE's on tori, appear in
\cite{RabinowitzS11}.  The paper \cite{CaL01} considers
minimal surfaces of codimemsion 1 (and even elliptic integrands) 
and allows rather general manifolds
(the fundamental group has to be residually finite). The notion of
minimizers there is related to cocycles, which by \cite{Brown82} is
also related to cohomology.  The survey \cite{Bangert99} considers 
also relations to geometric measure theory. The papers 
\cite{Blank, KochLR, CandelL, LlaveV,CaffarelliL05} consider different 
generalizations to statistical mechanics. Many other codimension 
one contexts have been obtained in the literature, including 
nonlocal operators. 
\end{remark}

\section{Tonelli theory of minimizing periodic orbits} 

\label{sec:tonelli}
The  work of Tonelli introduced many of the now standard 
semicontinuity arguments in the calculus of variations. 

\begin{theorem}[Tonelli]\label{Tonelli theorem}
Let $\M$ be a compact Riemannian manifold without boundary. 

For every homotopy class $c$, there is  a closed curve $\gamma_c$ of 
homotopy class $c$ such that 
\begin{equation}\label{eq:minimizer} 
|\gamma_c| = \min_{\gamma \text{ has homotopy class } c}\{ | \gamma | \}.
\end{equation} 
\end{theorem}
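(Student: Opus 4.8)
The plan is to use the direct method of the calculus of variations, which is exactly the circle of semicontinuity ideas for which Tonelli is celebrated. First I would set
$\ell = \inf\{\,|\gamma| : \gamma \text{ a closed curve of homotopy class } c\,\}$,
a finite nonnegative number (finite because one can always exhibit a piecewise-smooth loop representing $c$, and such a loop has finite length), and choose a minimizing sequence $\gamma_n$ of closed curves in the class $c$ with $|\gamma_n| \to \ell$. The goal is then to extract from $\{\gamma_n\}$ a subsequence whose limit both stays in $c$ and realizes $\ell$.

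The key preliminary step is a reparametrization. Since length is invariant under reparametrization and $\M$ is compact, I would replace each $\gamma_n$ by its constant-speed parametrization $\gamma_n : [0,1] \to \M$ (proportional to arclength), so that $\gamma_n$ has speed $|\gamma_n|$ almost everywhere, hence Lipschitz constant $|\gamma_n| \le \ell + 1$ for all large $n$. The family $\{\gamma_n\}$ is then uniformly bounded (as $\M$ is compact) and equicontinuous, so Arzel\`a--Arzel\`a--Ascoli yields a subsequence converging uniformly to a continuous, indeed Lipschitz, closed curve $\gamma$.

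Two things remain, and they carry the substance. First I would check that $\gamma$ still lies in the class $c$: once $\gamma_n$ is uniformly within the injectivity radius of $\gamma$, joining $\gamma_n(s)$ to $\gamma(s)$ by the unique short geodesic for each $s$ produces an explicit homotopy, so $\gamma$ is homotopic to $\gamma_n$ for large $n$ and therefore has homotopy class $c$. Second, and this is the heart of the matter, I would invoke \emph{lower semicontinuity of the length functional under uniform convergence}: writing the length as the supremum over partitions $0 = t_0 < \dots < t_k = 1$ of the sums $\sum_i \dist(\gamma(t_{i-1}), \gamma(t_i))$, each finite sum is continuous under uniform convergence, so the supremum is lower semicontinuous, giving $|\gamma| \le \liminf_n |\gamma_n| = \ell$. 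Since $\gamma \in c$ forces $|\gamma| \ge \ell$ by definition of $\ell$, we conclude $|\gamma| = \ell$, so $\gamma =: \gamma_c$ realizes the minimum in \eqref{eq:minimizer}.

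The main obstacle I expect is precisely this semicontinuity step together with its interaction with the reparametrization. One must ensure that the constant-speed minimizing sequence does not degenerate in the limit (for instance, that $\gamma$ is not a constant map when $c$ is nontrivial), which is exactly what the homotopy-preservation argument guarantees; and one must be careful that the semicontinuity inequality runs in the favorable direction, namely that length can only \emph{drop} in a uniform limit and never jump upward. Once these two points are secured, the Arzel\`a--Ascoli extraction and the reparametrization are routine, and the theorem follows.
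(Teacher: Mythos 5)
Your proposal is correct and follows essentially the same route as the paper's (sketched) proof: a minimizing sequence made precompact in $C^0$ via constant-speed reparametrization and Ascoli--Arzel\`a, lower semicontinuity of length under uniform convergence via the supremum-over-partitions characterization, and preservation of the homotopy class in the limit (the paper lists exactly these three ingredients, with the short-geodesic homotopy within the injectivity radius being the standard way to carry out the third). No substantive difference to report.
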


\begin{definition}\label{def:tonelli}
We will refer to the curves  that satisfy \eqref{eq:minimizer} as 
Tonelli curves or Tonelli minimizers. 
\end{definition}

By today's standard, the argument is rather standard. 
There are many variants of the proof. But all have more or less
the same ingredients:
\begin{enumerate}
\item
  The set of curves in a homotopy class
  with a finite total length  less than $A$ is pre-compact in $C^0$ topology.
  (Note that bounding the total length gives a bound in the oscillation).

  Of course, the set is not empty, for $A$ sufficiently large.

There are many precise formulations and can use some versions of
  Sobolev embedding theorem, Ascoli-Arzela, broken geodesics, etc. 
\item
  The length of curves is lower semicontinuous with respect to $C^0$
  convergence.

  This is a very simple argument if one defines the length as the supremum
  of the sum of the lengths of all the piecewise 
  geodesics joining points in the curve which are at sufficiently 
small distance (say, one half of the distance given by the Hopf-Rinow 
theorem). 

\item
  The passing to the limit in a minimizing sequence does not change the
  homotopy class. 
\end{enumerate} 
Many more details can be 
found in \cite{Mane90, Mather91, Mazzucchelli, Fathi97}.

\section{J. Mather's theory 
  of homological minimizing measures}
\label{sec:mather} 

An important point of view on minimization problem for 
geodesics (and more generally Lagrangians) was introduced 
by J. Mather in the 80's and 90's \cite{Mather89, Mather91, MatherF}. Important generalizations
were obtained  by R. Ma\~n\'e \cite{Mane90, Mane92, Mane96,  Mane97, ContrerasDI97, ContrerasI99}.
See also  \cite{Mazzucchelli,Sorrentino15}.

Of course we cannot even summarize the main concepts of 
this rich theory and we refer to the references above.
(The theory applies also to periodic Lagrangians, 
we omit even mention many important concepts 
such as the  critical value, dual formulation, 
 ergodic characterizations, etc.)

In this section, we will only discuss the 
minimizing measures point of view and how it relates to 
the Almgren-Federer example.   There are some resemblances
between the motivation of \cite{Federer74} and \cite{Mather89}: the 
need to consider minimizers in dual spaces, the role of cohomology.

\subsection{Some definitions in the   theory of  $c$ minimizing measures}

In this section, we go over briefly the main definitions. 
We will assume that all functions 
are differentiable enough  in this informal presentation. We 
refer to the references above  for  precise treatments.
Our main goal in this paper is just to present the main concepts as
applied to the Almgren-Federer example and we are omitting many
important precisions (regularity assumptions, precise convexity assumptions,
completeness of the flow, etc.)

In this section it will be crucial that we use the Lagrangian
$L_2$ from Appendix~\ref{sec:geodesic} since the superlinear
growth will be important.

We  recall that a one form $\eta$ is a function
$\eta: T\M \rightarrow \real$, which is linear on the tangent directions. 

Hence we can add a form to a Lagrangian. 
\[
L_\eta = L  - \eta.
\]
(Of course, the minus sign above is a convention, which simplifies 
some of the calculations). 

It is well known that  if $\eta$ is a closed form, the critical points 
of $L_\eta$ are the same as those of $L$. Indeed, if 
$\eta$ is a closed one form, the difference 
between the action of $L$ and $L_\eta$ are  
terms that depend only on the ends of $\eta$, which are fixed
in the variational process.\footnote{
In the Physics literature this is sometimes described as ``adding 
a complete differential''.} Hence, the Euler-Lagrange equations for 
$L_\eta$ are the same as those for $L$.

In other words, the curves that are critical points for 
the action corresponding to $L_\eta$ are the same as 
the curves that are critical points for the action corresponding to 
$L$. 
On the other hand, the curves minimizing the action of
$L_\eta$ can be different from 
those of $L$.

The main concept of \cite{Mather89,Mather91,Mane90} is to  consider 
variational principles not on orbits but on probability measures

\begin{equation}
  \label{measureaction}
A_{\eta}[\mu]  =  \int_{T\M}  L_\eta \, d \mu.
\end{equation}

Note that to be able to define \eqref{measureaction} comfortably,
it is important
that the $L$ grows faster than linearly in the velocities. To apply
lower semicontinuity arguments, it will also be important that the
Lagrangian is convex in the fibers of $T\M$. 

\def\MM{ \mathfrak{M}}

One conceptually very crucial observation of \cite{Mather89,Mather91} is 
that $A_\eta$, depends only on $c =  [\eta]$ 
cohomology class of $\eta$. 

Hence, we will henceforth use the notation 
$A_c$, where $c$ is a cohomology class.

In \cite{Mather91} the infimum was taken over the sets of
probability measures that were invariant under the geodesic flow.
The paper \cite{Mane92} considers minimizers over 
all possible probability measures and shows that one obtains the same 
value because all minimizing measures are
invariant. 

Note that in both cases, by lower semicontinuity, etc arguments, 
one finds that there are minimizers in the class of measures considered.

\begin{remark} 
We note that there is a dual formulation. One can prescribe 
a rotation number for a measure (which is an element in homology)
and one can consider the minimization over measures with this homology. 
See \cite{Carneiro, Sorrentino15, Fathi97}.  From our 
point of view -- computing the minimizers in a concrete example -- 
it seemed easier to use the unconstrained minimizers, since 
minimizing with the constraint is harder to write up. 
\end{remark}

A very important function in Mather's theory is
\[
  \alpha(c) = \inf_\mu A_c [\mu].
\]
Note that for a fixed $\mu$, $A_c[\mu]$ is linear in $c$, so
that $\alpha[c]$, being infimum of linear functions 
 is a concave function.

\subsection{Characterization of Mather's $c$ minimizers for 
the Almgren-Federer example.} 

In the Almgren-Federer example, the space of cohomology is just the reals.
A good representative of the cohomology class is $c dx$
and any element of the class can be written as $c dx + dS$ where
$S$ is a function on the manifold. 

\begin{lemma} 
In the Lagrangian $L_2$ corresponding to the 
metric \eqref{AFmetric}, 

The $c$ minimizing measures for $c \ne 0$ are 
the measures concentrated on the set $y = 0$, 
$L_2  = \frac{1}{4}c^3$ and the velocity is along the $x$ direction only. 
$v_x =  \sqrt{L_2}$

The $c$  minimizing measures for $c=0$
are those with velocity zero and any distribution
in the position variables.
\end{lemma}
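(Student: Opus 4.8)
The plan is to turn this measure-minimization into a pointwise minimization of the modified Lagrangian and then to read off which admissible measures saturate the resulting bound. Using the representative $\eta = c\,dx$ of the class $c$ (so that $\eta$ acts on $T\M$ by $v \mapsto c\,\dot x$) and the expression for $L_2$ recorded in Section~\ref{sec:classA}, write
\[
L_c = L_2 - c\,\dot x = (1+y^2)^2\,\dot x^2 - c\,\dot x + \frac{(1+y^2)^2}{1-y^2}\,\dot y^2 + (1+y^2)^2(1-y^2)\,\dot\sigma^2 .
\]
Since any other representative differs by $dS$, and $dS$ integrates to zero against every invariant probability measure (the integrand is a time derivative along the flow), $A_c[\mu]$ is independent of the representative and it suffices to work with this $L_c$.

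First I would establish a pointwise lower bound. For fixed $y$ the $\dot x$-part $(1+y^2)^2\,\dot x^2 - c\,\dot x$ is a convex parabola with minimum $-c^2/\bigl(4(1+y^2)^2\bigr)$ attained at $\dot x = c/\bigl(2(1+y^2)^2\bigr)$; since $(1+y^2)^2 \ge 1$ and the remaining two summands are nonnegative for $|y|<1$, one obtains $L_c \ge -\tfrac14 c^2$ on all of $T\M$, with equality exactly on
\[
S = \bigl\{\, y=0,\ \dot y = 0,\ \dot\sigma = 0,\ \dot x = \tfrac{c}{2} \,\bigr\}.
\]
Integrating against a probability measure gives $A_c[\mu] \ge -\tfrac14 c^2$, hence $\alpha(c) \ge -\tfrac14 c^2$; and because the bound is pointwise, a probability measure attains the value $-\tfrac14 c^2$ \emph{if and only if} it is carried by $S$.

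Next I would exhibit an admissible minimizer and argue it is unique. Because the conformal factor $(1+y^2)^2$ has a critical point at $y=0$, a short check of the Euler--Lagrange equations shows $\ddot y = 0$ along the equatorial curves $x(t) = \tfrac{c}{2}t$, $y\equiv 0$, $\sigma\equiv \text{const}$, so these are genuine $L_2$-geodesics and $S$ is flow-invariant. Projected to the base, the flow on $S$ is uniform motion along the equator $\E$ which, under the identification \eqref{equivalence}, is precisely the suspension of the irrational rotation by $\omega$, hence uniquely ergodic. Therefore there is a \emph{unique} invariant probability measure supported on $S$, and by the previous step it is the unique $c$-minimizer. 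A direct substitution on $S$ gives velocity purely in the $x$-direction with $v_x = \dot x = \tfrac{c}{2}$, so that $L_2 = v_x^2 = \tfrac14 c^2$ and $v_x = \sqrt{L_2}$, which is the asserted description. The case $c=0$ is immediate: $L_0 = L_2 \ge 0$ with equality exactly on the zero section, whose points are fixed by the geodesic flow; thus every probability measure carried by the zero section is invariant and minimizing, with an arbitrary marginal in the position variables.

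The step I expect to be the main obstacle is the uniqueness/achievability in the third paragraph: one must verify that $S$ is genuinely flow-invariant (the Euler--Lagrange computation exploiting that $y=0$ is critical for $(1+y^2)^2$) and that the restricted flow is uniquely ergodic, since this is exactly what upgrades the statement that every minimizer is supported on $S$ to the statement that there is a unique minimizer. The pointwise lower bound, by contrast, is routine once $L_c$ is written in the $(\dot x,\dot y,\dot\sigma)$ coordinates.
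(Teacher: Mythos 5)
Your proof is correct, but it gets to the conclusion by a different mechanism than the paper. The paper disintegrates an arbitrary candidate measure along the level sets of the conserved quantity $L_2$ and along the parallels $\{y=y_0\}$, and then optimizes the conditional measures in three successive stages (maximize $v_x$ on the energy sphere, then minimize over $y_0$, then over $E$), arriving at $y_0=0$, $E=\tfrac14 c^2$, $v_x=\sqrt{E}$. You instead complete the square in $\dot x$ once and for all, obtaining the pointwise bound $L_2-c\,\dot x \ge -\tfrac14 c^2$ on $T\M$ with equality exactly on the set $S=\{y=0,\,\dot y=0,\,\dot\sigma=0,\,\dot x=c/2\}$; this replaces the paper's staged disintegration by a single convexity estimate and immediately characterizes the support of any minimizer. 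Your third paragraph then goes beyond what the paper proves: by checking that $S$ is invariant (the equator is totally geodesic, e.g.\ as the fixed set of the isometry $y\mapsto -y$, so the curves $x(t)=\tfrac{c}{2}t$, $y\equiv 0$, $\sigma\equiv\mathrm{const}$ are genuine geodesics) and that the restricted flow is the suspension of the irrational rotation \eqref{equivalence}, hence uniquely ergodic, you get existence \emph{and} uniqueness of the $c$-minimizing measure, whereas the paper only identifies where minimizers must be concentrated. Two small points to tidy up: the $(\dot x,\dot y,\dot\sigma)$ expression degenerates at the poles $|y|=1$, so there you should run the same bound with the $(\dot x,\dot y,\dot z)$ form of $L_2$ (where $(1+y^2)^2=4$ makes the inequality strict); and for $c<0$ your formula $\dot x=c/2$ is the right one, i.e.\ $v_x=-\sqrt{L_2}$, which is the case the paper omits (note also that the lemma's ``$L_2=\tfrac14 c^3$'' should read $\tfrac14 c^2$, consistent with both your computation and the paper's).
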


\begin{proof}

In this case, the $L_2$ is a conserved quantity of 
the geodesic flow. For a number $E$, the set of points
in $T\M$ that satisfy $L_2 = E$ is a manifold.~\footnote{We abuse the letter $E$, since we had also 
used it in Appendix~\ref{sec:geodesic}. Also, we note for physicists 
that there is a factor $2$ compared with customary definitions of 
the kinetic energy.}
We disintegrate the measure along the 
level surfaces of $L_2$ -- which is natural since it is a conserved 
quantity -- and furthermore disintegrate along $y$ levels. 
We write the problem 
as 
\begin{equation} 
\label{disintegrated} 
\int \,  d\alpha(E) \int \,d\beta_E(y_0) \int_{L_2 = E; y = y_0}  
\, d \nu_{E,y_0} 
( L_2  - c v_x  ) 
\end{equation} 
and we choose the conditional measures $\alpha, \beta, \nu$ 
so as to minimize
the integral. Note that there is some ambiguity in the 
integration. We can multiply by a factor $\nu$ and by the 
inverse factor $\beta$, etc. So that we can assume 
that $\nu_{E, y_0}$ is either zero or a probability measure.   
We can assume that it is always a probability measure and that the 
measures $\beta, \alpha$ give no weight to the $E, y_0$ for 
which $\nu$ is zero.

Since $d \nu_{E, y_0}$ is in the level set $L_2 = E$, for $c > 0$,
the integrand reduces to $E - c v_x$. If we had any measure $\nu$ we 
could do better by another measure which is concentrated as much
as possible in the  value of $v_x$. that makes the integrand
smaller. For $c > 0 $ -- which is the case that we will consider 
for the moment-- we want to take $v_x $ as large as possible 
consistent with being in the energy surface and a parallel. 
Since $E = (1+y_0^2)^2( v_x^2 + v_y^2 + v_z^2)$, we 
see that the optimal value to concentrate $v_x$ is 
$\sqrt{E}/(1 + y_0^2)$. 

Hence, we can assume that  for the minimizing measure $\nu$, we 
have:
\[
\int_{L_2 = E;n y = y_0}  
\, d \nu_{E,y_0} 
L_2  - c v_x =   E - c \sqrt{E}/(1+y_0^2) .
\]

Now we consider the measure $d \beta_E(y_0)$. Since $E, c$ are
fixed positive numbers, we can again assume that $d \beta$ is 
concentrated on the $y_0$ for which the integrand is smallest. 
That is, $y_0 = 0$. 

Hence, we are reduced to studying 
\[
\int \, d\alpha(E)  E - c \sqrt{E} .
\]

Again, clearly, it is good to concentrate the measure in the values 
of $E$ that make the integrand the smallest. An elementary calculation 
gives that this minimum is reached for $E = \frac{1}{4}c^2 $. 

The calculation for $c < 0$ is very similar and we omit the details. 

As for the case $c= 0$, we see that the minimization of the integrand 
happens precisely when all the velocities are zero. In this case 
the distribution in the space variables does not matter. 

\end{proof}

\subsection{Weak KAM theorem} 
There is a close connection between Mather's theory and weak KAM solutions of Hamilton-Jacobi equation:
\begin{equation}
H_2(x,du(x) + c )=\bar H(c), \quad x\in \M  \label{equation:PDEcellEquation} 
\end{equation}
where $H_2$ is the Hamiltonian associated to $L_2$. This equation is a
degenerate PDE equation of first order with two unknowns 
$(\bar{H}(c),u)$. The constant $\bar{H}(c)$ is unique for any given $c$ and is
called the effective Hamiltonian (also called Mather's
$\alpha$-function). The function $u$ defined on $\M$ is $C^0$
but may not be unique.One can refer \cite{Fathibook} for the weak KAM
theory.  Roughly speaking, the minimal geodesics can be embedded into
the characteristic fields of Hamilton-Jacobi equation.

\begin{definition}[Calibrated curve]
We say that  a  curve 
$\gamma: \mathbb{R}  \rightarrow \M$ is calibrated if for any $t, t' \in I$ with $t\leq t'$, we can find a real function $u$ such 
that 
\[
u(\gamma(t')) - u(\gamma(t)) = \int_t^{t'} L_\eta(\gamma(s), \dot{\gamma}(s)) ds + \bar{H}(c) (t'-t).
\]
\end{definition}

\begin{proposition} 
The Class-A geodesics in the Almgren-Federer example are calibrated
\end{proposition}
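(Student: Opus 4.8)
The plan is to show that the Class-A geodesics characterized in Theorem~\ref{thm:classA}, namely the equatorial lines $x(t)=t$, $y(t)=0$, $z(t)=z_0$, satisfy the calibration identity with respect to the cohomology class $c=1$ (the natural class whose minimizing measure, by the Lemma above, is concentrated on the equator). First I would identify the correct representative one-form $\eta$ and the effective Hamiltonian $\bar H(c)$. Since the $c$-minimizing measure for $c\neq 0$ sits on $\{y=0\}$ with velocity purely in the $x$-direction and $L_2=\tfrac14 c^2$, and since $\bar H(c)=\alpha(c)=\inf_\mu A_c[\mu]$, I would compute $\bar H(c)$ directly from the value of the minimized action $E - c\sqrt E$ at $E=\tfrac14 c^2$, giving $\bar H(c) = -\tfrac14 c^2$. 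The key structural fact I would invoke is that a Class-A geodesic in the universal cover minimizes length between any two of its points; combined with the superlinear Lagrangian $L_2$ and the standard equivalence between length-minimizers and $L_2$-minimizers at fixed energy, this lets me pass from the Class-A minimizing property to the action-minimizing property needed for calibration.

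The main steps, in order, are as follows. First, construct the weak KAM solution $u$ of the cell equation \eqref{equation:PDEcellEquation} for the class $c$; by symmetry of the metric \eqref{AFmetric} under $x$-translation and rotation, one expects $u$ to be (locally) a function of $y$ alone, or even constant along the equator, so that $du$ vanishes on the equatorial directions. Second, verify that along an equatorial geodesic $\gamma(t)=(t,0,z_0)$ the integrand $L_\eta(\gamma,\dot\gamma) = L_2(\gamma,\dot\gamma) - \langle c\,dx + du,\dot\gamma\rangle$ is constant and equals exactly $-\bar H(c)$, which reduces the calibration identity $u(\gamma(t'))-u(\gamma(t)) = \int_t^{t'} L_\eta\,ds + \bar H(c)(t'-t)$ to a triviality once $u$ is constant along the orbit. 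Third, confirm that this $\gamma$ is actually a minimizer of the $L_\eta$-action among curves joining its endpoints, not merely a critical point; this is where the Class-A hypothesis does the real work, since it guarantees global length-minimization in the universal cover, which transfers to $L_\eta$-action-minimization because adding a closed form $c\,dx+du$ changes the action only by boundary terms and because the chosen orbit realizes the optimal energy $E=\tfrac14 c^2$ and optimal concentration of $v_x$ from the Lemma.

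The hard part will be the third step: establishing that the weak KAM subsolution $u$ is genuinely calibrated by $\gamma$ (i.e. that $\gamma$ achieves equality in the fundamental inequality $u(b)-u(a) \le \int_a^b L_\eta + \bar H(c)(b-a)$) rather than merely verifying the critical-point condition. The cleanest route is to argue contrapositively, exactly as in the proof of Proposition~\ref{totalbudget} and the analysis of \S\ref{sec:classA}: if $\gamma$ were not calibrated, there would be a competitor curve with strictly smaller $L_\eta$-action, and by the closedness of $c\,dx+du$ this competitor would have strictly smaller length in the universal cover between the same endpoints, contradicting the Class-A property. A subtlety to handle carefully is the matching of the energy level: calibrated curves for the class $c$ necessarily lie on the energy surface $L_2=\bar H(c)+\text{(const)}$ dictated by the Mather theory, and one must check that the equatorial Class-A geodesics, parametrized as in \eqref{classAorbits} with unit $x$-speed, indeed sit at the energy $E=\tfrac14 c^2$ singled out in the Lemma, so that the normalization of $c$ and of the arclength parametrization are compatible. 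Once these normalizations are reconciled, the calibration identity follows directly from constancy of $u$ along the orbit and the Class-A minimization property.
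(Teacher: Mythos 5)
Your second step is, in essence, the paper's entire argument: the paper writes $\eta = c\,dx + dS$ and observes that, since $E$ and $v_x$ are constant along the explicit Class-A geodesics \eqref{classAorbits}, the action $\int_t^{t'} L_\eta(\gamma,\dot\gamma)\,ds$ equals $(E - c\,v_x)(t'-t)$ plus the exact boundary terms $-S(\gamma(t'))+S(\gamma(t))$, so that, with the paper's (weak) formulation of calibration (``we can find a real function $u$''), the identity holds by letting $u$ absorb $S$ once the constant $E - c\,v_x$ matches $-\bar H(c)$. So the core of your proposal coincides with the paper's direct verification; the issues are in what you add around it.

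Two of those additions are genuinely problematic. First, the normalization: you open by asserting calibration with respect to $c=1$, but the curves \eqref{classAorbits} have unit $x$-speed, hence $L_2 = E = 1$, whereas the Lemma's minimizing measures for class $c$ sit at $E = c^2/4$ with $v_x=\sqrt{E}$; constancy of $L_\eta$ at the value $-\bar H(c)$ along the orbit forces this energy matching, so for the parametrization \eqref{classAorbits} you need $c=2$ (or you must reparametrize the curve to speed $c/2$). You flag this at the end but never resolve it, and as stated your opening claim is false: the unit-speed equatorial line lies on the wrong energy level for $c=1$ and is not calibrated for that class. Second, your ``hard part'' (step 3) does not work as written. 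Failure of calibration does not produce a competitor curve of strictly smaller $L_\eta$-action between the same endpoints: calibration is equality against a fixed weak KAM solution $u$, not fixed-endpoint action minimality, so the contrapositive gives you nothing to compare. Moreover, even if you had a competitor with smaller $L_2$-action over the same time interval, that does not yield smaller length; length and $L_2$-action are related by a Cauchy--Schwarz inequality with equality only at constant speed, which is precisely the energy/parametrization issue again, so the claimed contradiction with the Class-A property does not follow. Fortunately this step is unnecessary: once $c$ and the parametrization are reconciled, the calibration identity is exactly the computation of your step 2, and the Class-A hypothesis enters only through Theorem~\ref{thm:classA}, which tells you the Class-A geodesics are the equatorial lines on which that computation applies.
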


We note that the closed form $\eta$ is given by $\eta = c dx + d S$ 
where $S$ is a function on the manifold $\M$.

Since the energy is conserved, 
\[
\int_t^{t'} L_\eta(\gamma(s), \dot{\gamma}(s)) ds  = 
E (t- t') - c v_x (t - t') - S(\gamma(t')) + S(\gamma(t)) .
\]

One of the consequences of the general theory \cite{Fathibook} 
 is that the calibrated 
curves are Class-A.

\bibliographystyle{alpha}
\bibliography{reference}
\end{document}